\def\pr{\mathbb{P}}
\def\E{\mathbb{E}}
\def\var{\mathbb{Var}}
\def\vol{\textrm{vol}}
\def\E{\mathbb{E}}
\def\R{\mathbb{R}}
\def\eps{\varepsilon}
\def\cD{\mathcal{D}}
\def\1{\mathbf{1}}
\def\lam {\lambda}
\def\tce{t_c + \eps}
\def\tce2{t_c + \frac{\eps}{2}}
\def\bv{\mathbf{v}}
\def\bT{\mathbf{T}}
\def\bX{\mathbf{X}}
\def\var{\text{var}}
\def\vol{\text{\rm vol}}
\def\FV{\mathrm{FV}}
\newtheorem*{theorem*}{Theorem}
\newtheorem{theorem}{Theorem}
\newtheorem{lemma}[theorem]{Lemma}
\newtheorem{cor}[theorem]{Corollary}
\newtheorem*{defn*}{Definition}
\newtheorem{prop}[theorem]{Proposition}
\newtheorem*{prop*}{Proposition}
\newtheorem*{conj*}{Conjecture}
\newtheorem*{fact*}{Fact}
\begin{document}
\title[On the hard sphere model and sphere packings in high dimensions]{On the hard sphere model and sphere packings \\ in high dimensions}

\author{Matthew Jenssen}
\author{Felix Joos}
\author{Will Perkins}

\thanks{This research leading to these results was supported in part by EPSRC grants EP/P009913/1 and EP/M009408/1.}
\address{University of Oxford}
\email{matthew.jenssen@maths.ox.ac.uk}
\address{University of Birmingham}
\email{f.joos@bham.ac.uk}
\address{University of Birmingham}
\email{math@willperkins.org}
\date{\today}

\begin{abstract}
We prove a lower bound on the \textit{entropy} of sphere packings of $\R^d$ of density $\Theta(d \cdot 2^{-d})$.  The entropy measures how plentiful such packings are, and our result is significantly stronger than the trivial lower bound that can be obtained from the mere existence of a dense packing. 
Our method also provides a new, statistical-physics-based proof of the $\Omega(d  \cdot 2^{-d})$ lower bound on the maximum sphere packing density  by showing that the expected packing density of a random configuration from the hard sphere model is at least $(1+o_d(1)) \log(2/\sqrt{3}) d \cdot 2^{-d}$ when the ratio of the fugacity parameter to the volume covered by a single sphere is at least $3^{-d/2}$.  Such a bound on the  sphere packing density was first achieved by Rogers, with subsequent improvements to the leading constant by Davenport and Rogers, Ball, Vance, and Venkatesh.

\end{abstract}

\maketitle

\section{Sphere packings in high dimensions}
\label{sec:spherepack}

The \textit{sphere packing density} of $d$-dimensional Euclidean space, $\theta(d)$, is the supremum of the packing density over all packings $\mathcal P$ of $\R^d$ by equal-sized spheres;
that is,
\begin{align*}
\theta(d) &= \sup_{\mathcal P} \limsup_{R \to \infty}  \frac{ \vol(\mathcal P \cap B_R(0)) }{\vol(B_R(0))} \, , 
\end{align*}
where $B_R(x)$ is the closed ball of radius $R$ around $x$ and $\vol(\mathcal P \cap B_R(0))$ is the volume of $B_R(0)$ covered by spheres in the packing $\mathcal P$.
The precise value of $\theta(d)$ is known in only a small number of dimensions; to be precise for $d\in \{1,2,3,8,24\}$. 
While $d=1$ is trivial and $d=2$ is elementary but not trivial, the proof for $d=3$ was a monumental achievement of Hales~\cite{hales2005proof}, 
and the cases $d=8$ and $d=24$ were proved only very recently following a breakthrough of Viazovska~\cite{viazovska2017sphere} ($d=8$) and  Cohn, Kumar, Miller, Radchenko, and Viazovska~\cite{cohn2016sphere} ($d=24$); see~\cite{cohn2016conceptual} for an exposition of these recent developments.

Optimal sphere packings in  high dimensions are even more mysterious. 
It is not even clear whether lattice  packings achieve the optimal packing density or if the best packings are disordered.
A lower bound of $\theta(d) \ge 2^{-d}$ is trivial. Take any saturated packing; doubling the radii of the spheres must cover all of $\R^d$, or else another center could be added. Therefore the original density must be at least $2^{-d}$.  This bound has been improved by a factor of $d$ by Rogers~\cite{rogers1947existence}, with subsequent improvements to the constant by Rogers and Davenport~\cite{davenport1947hlawka}, Ball~\cite{ball1992lower}, Vance~\cite{vance2011improved} (in dimensions divisible by $4$), culminating in the bound of Venkatesh~\cite{venkatesh2012note} that $\theta(d) \ge (65963 +o_d(1)) d \cdot 2^{-d} $. Venkatesh also gains an additional $\log \log d$ factor in a sparse sequence of dimensions.  An upper bound of $\theta(d) \le 2^{- (.599\dots +o_d(1)) \cdot d}$ is due to Kabatiansky and Levenshtein~\cite{kabatiansky1978bounds}; Cohn and Zhao~\cite{cohn2014sphere} made a recent constant factor improvement.

Notably there has been no progress in closing the gap on an exponential scale between the trivial lower bound and the Kabatiansky and Levenshtein upper bound. See the books of Rogers~\cite{rogers1964packing}, Conway and Sloane~\cite{conway2013sphere}, and Cohn~\cite{cohn2016packing} for an overview of results and techniques in the area.

Several of the previous proofs of lower bounds on $\theta(d)$  analyze a random lattice packing by way of the Siegel mean-value theorem~\cite{siegel1945mean} or variants thereof; a bound of $2 \cdot 2^{-d}$ is achieved by analyzing a uniform random lattice (see Proposition~6.1 of~\cite{cohn2016packing}); by imposing additional symmetries on the random lattice  Vance~\cite{vance2011improved} and Venkatesh~\cite{venkatesh2012note} gain a factor $d$ and an improved constant.  But optimal packings in high dimensions are not necessarily lattice packings (see the conjectures of Torquato and Stillinger~\cite{torquato2006new}). If this is so, then we need different tools and constructions.  One natural candidate is the \textit{hard sphere model} from statistical physics (`hard' spheres since the only interaction between particles is the hard constraint that spheres cannot overlap).  This is a probability distribution over sphere packings governed by a fugacity parameter $\lam >0$. The larger $\lam$, the larger the typical density of a random packing from the model.

Here we utilize the hard sphere model to analyze sphere packings in high dimensions.  We show that for an appropriate choice of the fugacity, the expected packing density of a configuration drawn from the hard sphere model is $\Omega(d\cdot 2^{-d})$.  The argument not only gives a statistical physics proof of the lower bound on $\theta(d)$, but also gives a lower bound on the entropy of sphere packings of this density. We define the entropy precisely in Section~\ref{sec:hardspheres}, but it essentially expresses the exponential order of the fraction of sets of $\alpha n$ points in a ball of volume $n$ that are centers of a valid sphere packing in $\R^d$.  That is, it is a measure of how plentiful packings of a given density are. 

The proof technique is general; in fact a version of the argument in a discrete setting~\cite{davies2016average} 
(where the relevant statistical physics model is the hard-core model) 
states that a uniformly random independent set chosen from a triangle-free graph of maximum degree $r$ occupies at least a $\log r /r$ fraction of the vertices in expectation.  
This result gives an alternative proof of Shearer's bound of  Ramsey number $R(3,k) \le (1+o(1)) k^2/ \log k$~\cite{shearer1983note}, which is itself a sharpening of the independent set result of Ajtai, Koml\'os, and Szemer\'edi~\cite{ajtai1980note} used by Krivelevich, Litsyn, and Vardy~\cite{krivelevich2004lower} (following~\cite{jiang2004asymptotic}) to give an alternative proof of the $\Omega(d \cdot 2^{-d})$ lower bound on $\theta(d)$ by formulating the problem in terms of finding a large independent set in a graph derived by discretizing a region in $\R^d$. 
Since the first version of this paper, we have also used a variant of the method to prove lower bounds on the kissing number and size of spherical codes in high dimensions~\cite{JJP18kissing}.

In principle, the hard sphere model is a good random model with which to study optimal and near optimal sphere packings, as typical packings from the model will have density arbitrarily close to $\theta(d)$ for a large enough choice of the fugacity parameter $\lam$. Analyzing the typical packing density, however, is another matter, and we do not expect our particular technique, which relies only on local information, to improve the exponential order of the lower bound on $\theta(d)$.  
In the analogy with independent sets in graphs, the $\Omega(d \cdot 2^{-d})$ bound corresponds to the  $\Omega(\log r/r) $ lower bound on the independence ratio of a $r$-regular triangle-free graph.  
However, random $r$-regular graphs and random $r$-regular bipartite graphs have the same local structure asymptotically yet have drastically different  independence ratios: $2 \log r /r $ and $1/2$ respectively.

In Section~\ref{sec:hardspheres}, we explain the hard sphere model in detail and state our main result (Theorem~\ref{thmAlpaLB}).
In Section~\ref{sec:LB}, we prove Theorem~\ref{thmAlpaLB}.  
In Section~\ref{secLBpartition}, we use Theorem~\ref{thmAlpaLB} to prove a lower bound on the volume of sphere packings of density $\Theta(d \cdot 2^{-d})$.  This lower bound is significantly larger than the trivial bound obtained by shrinking the spheres of a dense packing and allowing the centers to move locally. 

In what follows $\log x$ always denotes the natural logarithm of $x$.  We use standard asymptotic notation with a subscript indicating the parameter with respect to which we take asymptotics.  So $f(d) = o_d(1)$ if $\lim_{d \to \infty} f(d) =0$. For $x,y\in \R^d$, we let $d(x,y)$ denote the Euclidean distance between $x$ and $y$, and for $X\subseteq\R^d$ we let $d(X,y)=\inf_{x\in X}d(x,y)$. The sphere of radius $r$ centered at $x$ in $\R^d$ is $\{ y: d(x,y)=r\}$, while the (open) ball of radius $r$ is $\{ y: d(x,y) < r \}$.

\section{The hard sphere model}
\label{sec:hardspheres}

 The \textit{hard sphere model} is a probability distribution over configurations of non-overlapping, identical spheres in a bounded subset of Euclidean space (that can be extended with a limiting argument to a distribution on packings of  all of $\R^d$).  
 There are two variants of the model: the \textit{canonical ensemble} is a uniformly random packing of a given fixed density and the \textit{grand canonical ensemble} is a random packing with variable density governed by a fugacity parameter $\lam>0$.    
 The hard sphere model is a simple model of a gas or fluid with no interactions apart from the hard constraint that molecules cannot overlap.  In dimension~$2$ and $3$ the model is expected to exhibit a \textit{freezing} phase transition, though proving this remains an open mathematical problem Such a phase transition would show that freezing and crystallization can be explained by purely geometric concerns.  The nature of such a phase transition may be different in $2$ dimensions than in $3$: Richthammer~\cite{richthammer2007translation} has proved that there can be no translational symmetry breaking in dimension $2$.   For more see L{\"o}wen's survey~\cite{lowen2000fun}. 

To define the model precisely, we assume the spheres of our packings have volume $1$ and  denote by $r_d$ the radius of a ball of volume $1$ in $\R^d$. For a bounded, measurable subset $S \subset \R^d$, let $C_k(S)$ be the set of unordered $k$-tuples of points from $S$; that is, 
\[ C_k(S)=\{ \{x_1, \dots x_k\}: x_i\in S\,  \forall\, i\} \, . \]
  Let 
  \[ P_k(S)= \{ \{ x_1, \dots x_k \} \in C_k(S): d(x_i,x_j) > 2 r_d \, \forall \, i \ne j \} \, ; \]
   that is, $P_k(S)$ is the subset of $C_k(S)$ consisting of the centers of packings of spheres of volume~$1$. Note that we allow centers near the boundary of $S$, so the spheres themselves need not lie entirely within $S$. 

The \textit{canonical hard sphere model} on $S$ with $k$ centers is simply a uniformly random  $k$-tuple $\mathbf X_k \in P_k(S)$.    The \textit{partition function} of the canonical hard sphere model on $S$ is the function
\begin{align}
\hat Z_S(k) &= \frac{1}{k!} \int_{S^k}  \mathbf 1_{ \cD(x_1,\ldots,x_k)}  \, dx_1 \cdots dx_k    \,,
\end{align}
where for $x_1,\ldots,x_k\in \R^d$,  the expression  $\cD(x_1,\ldots,x_k)$ denotes the event that 
$d(x_i,x_j)>2r_d$ for all distinct $i,j\in [k]$. In other words, $\hat Z_k(S)$ is the volume of $P_k(S)$ in the space of unordered $k$-tuples from $S$.  As the volume of $C_k(S)$  is $\vol(S)^k/k!$, the probability that $k$ uniformly random points in $S$ are the centers of a sphere packing is  $\frac{k!}{\vol(S)^k} \hat Z_S(k)$.

In the canonical ensemble the number of centers is fixed.  In the grand canonical ensemble we imagine $S$ lying in some larger region with which it can exchange particles, and so the number of centers is allowed to fluctuate. 
   
The \textit{grand canonical hard sphere model} on a bounded, measurable set $S \subset \R^d$ at fugacity $\lam$ is a random set $\mathbf X$ of unordered points, with $\mathbf X$ distributed according to a Poisson point process of intensity $\lam$ conditioned on the event that $d(x,y) > 2 r_d$ for all distinct $x,y \in \mathbf X$.

The partition function of the grand canonical hard sphere model on $S$ is
\begin{align}
\label{eqPartitionFunction}
Z_{S}(\lam)&=  \sum_{k \ge 0} \lam^k  \hat Z_S(k) 
\end{align}
where we take $\hat Z_S(0)=1$.   If $S$ is bounded then $Z_S(\lam)$ is a polynomial in $\lam$.

Note that the fugacity $\lam$ is not an absolute quantity: defining the model with spheres of a different size would lead to a different scaling of the fugacity.  The right absolute parameter to consider is the ratio of the fugacity to the volume enclosed by a single hard sphere; as we consider spheres of volume $1$ here, this ratio is $\lam$ as well.

In both the canonical and grand canonical ensembles, the partition function and its normalized logarithm play a central role in the study of the hard sphere model. Let $B_n = B_{n^{1/d} \cdot r_d}(0)$ be the ball of volume $n$ around the origin in $\R^d$. It follows from subadditivity that the limits
\begin{align*}
 f_d(\alpha)&:= \lim_{n \to \infty} \frac{1}{\alpha n} \log  \frac{\hat Z_{B_n}(\lfloor \alpha n \rfloor) }{n^{\lfloor \alpha n \rfloor}/(\lfloor \alpha n \rfloor)!}\\
 g_d(\lam)&:= \lim_{n \to \infty} \frac{1}{n} \log Z_{B_n}(\lam)
\end{align*}
 exist for $\alpha \in (0, \theta(d))$ and $\lam >0$.  We will call $f_d(\alpha)$ the \textit{entropy density} of sphere packings of $\R^d$ at density $\alpha$, and $g_d(\lam)$ the \textit{pressure} of the hard sphere model.  Both are measurements of how plentiful sphere packings are in $\R^d$. The entropy density is minus the thermodynamic \textit{free energy}, which itself is the large deviation rate function of the probability that $\alpha n$ random points in $B_n$ form a sphere packing. We dispense with the minus sign so that a lower bound on $f_d(\alpha)$ corresponds to a lower bound on the quantity of sphere packings.  Dividing by $\alpha n$ ensures that $f_d(\alpha)$ is independent of the choice of the size of spheres in our packings. See e.g.~\cite{radin2005structure} for a discussion of the entropy density in dimension $3$.  
 
The statistical physics definition of a phase transition in the hard sphere model is that the entropy density (resp.~the pressure)  is non-analytic at some $\alpha^* \in (0, \theta(d) )$ (resp.~at some $\lam^* >0$). See \cite{fernandez2007analyticity,hofer2015disagreement,pulvirenti2012cluster} for some recent results showing that the entropy density or pressure is analytic below some threshold in $\alpha$ or $\lambda$. See also~\cite{diaconis2011geometric,hayes2014lower,kannan2003rapid} for results showing that certain Markov chains for sampling from these models mix rapidly below a given threshold.  

  In fact in the large volume limit the two ensembles are essentially equivalent, as for each $\lam >0$, there is a typical density $\alpha(d,\lam)$ with small fluctuations.  However, computing this conversion function $\alpha(d,\lam)$ is as difficult as understanding both the sphere packing problem and the problem of phase transitions in the hard sphere model, as $\lim_{\lam \to \infty} \alpha(d,\lam) = \theta(d)$ (e.g.~\cite{mase2001packing}) and $\alpha(d,\lam)$ is non-analytic at $\lam$ at which $g_d(\lam)$ is non-analytic.  The main task of this work is to prove a lower bound on $\alpha(d,\lam)$. 

The \textit{expected packing density}, $\alpha_S(\lam)$, of the hard sphere model is simply the expected number of centers in $S$ normalized by the volume of $S$; 
that is, 
\[ \alpha_S(\lam) = \frac{ \E_{S,\lam} |\mathbf X |}{\vol(S)}  .\]
Here and in what follows the notation $\pr_{S,\lam}$ and $\E_{S,\lam}$ indicates probabilities and expectations with respect to the grand canonical hard sphere model on a region $S$ at fugacity $\lam$. We may omit the subscripts if $S$ and $\lam$ are clear from the context.

The expected packing density can be expressed as the derivative of the normalized log partition function. 
We calculate
\begin{align}\notag
\alpha_S(\lam) &= \frac{1}{\vol(S)} \sum_{k=1}^\infty k \cdot \pr_{S,\lam}[|\mathbf X|=k] \\
&=\frac{1}{\vol(S)  }  \notag \sum_{k=1}^\infty \frac{k \cdot \lam^k \hat Z_S(k)}{Z_S(\lam)}    \\
&= \frac{1}{\vol(S)}  \frac{\lam \cdot Z_S'(\lam) }{Z_S(\lam)    }\label{eq:aZ}  \\
&= \frac{\lam }{\vol(S)}  ( \log Z_S(\lam))' \, .\label{eq:alogZ}
\end{align}

The next lemma shows that the expected packing density of the hard sphere model provides a lower bound for $\theta(d)$.
\begin{lemma}
The asymptotic expected packing density of $B_n \subset \R^d$  is a lower bound on the maximum sphere packing density. That is, for any $\lam>0$,
\begin{equation*}
 \theta(d) \ge \limsup_{n \to \infty}  \alpha_{B_n}(\lam)\, .
 \end{equation*}
\end{lemma}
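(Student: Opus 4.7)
The plan is to route through the hard sphere model on a flat torus, where configurations lift directly to periodic sphere packings of $\R^d$, and then to compare the torus and ball ensembles in the thermodynamic limit via convexity of the log partition function.

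First, I would consider the hard sphere model on the flat torus $\mathcal{T}_L = \R^d/(L\Z)^d$ at fugacity $\mu$, defined exactly analogously to the model on $B_n$ but with the Euclidean distance replaced by the torus distance. Any realization $\mathbf{X} \subset \mathcal{T}_L$ lifts to an $L$-periodic sphere packing $\mathbf{X} + L\Z^d$ of $\R^d$; since each sphere has volume $1$ and the fundamental cube has volume $L^d$, its asymptotic density (in the sense used to define $\theta(d)$) equals exactly $|\mathbf{X}|/L^d$. Hence $\theta(d) \ge |\mathbf{X}|/L^d$ for every realization, so taking expectations yields $\theta(d) \ge \alpha_{\mathcal{T}_L}(\mu)$ for every $L>0$ and every $\mu > 0$.

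Next, I would compare the torus and ball ensembles in the thermodynamic limit. The same subadditivity argument that produces $g_d(\mu) = \lim_n (1/n)\log Z_{B_n}(\mu)$ also shows $\lim_{L \to \infty} (1/L^d) \log Z_{\mathcal{T}_L}(\mu) = g_d(\mu)$ for every $\mu > 0$, so the torus and ball pressures agree. Since $\alpha_S(\lambda) = (\lambda/\vol(S))(\log Z_S)'(\lambda)$ and the map $s \mapsto g_d(e^s)$ is convex (hence differentiable outside a countable set), a standard theorem on derivatives of pointwise-convergent convex functions gives, at every $\mu$ where $g_d$ is differentiable, $\lim_L \alpha_{\mathcal{T}_L}(\mu) = \mu g_d'(\mu)$; at any $\lambda$, one has moreover $\limsup_n \alpha_{B_n}(\lambda) \le \lambda g_d'_+(\lambda)$, where $g_d'_+$ is the right derivative. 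Choosing $\epsilon > 0$ with $g_d$ differentiable at $\lambda + \epsilon$, we get $\theta(d) \ge \lim_L \alpha_{\mathcal{T}_L}(\lambda + \epsilon) = (\lambda + \epsilon)g_d'(\lambda + \epsilon)$; letting $\epsilon \to 0^+$ through differentiable points gives $\theta(d) \ge \lambda g_d'_+(\lambda) \ge \limsup_n \alpha_{B_n}(\lambda)$.

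The main obstacle is the comparison at fugacities $\lambda$ where $g_d$ is non-differentiable, corresponding to a possible freezing phase transition of the hard sphere model. The argument circumvents this by approaching $\lambda$ from the right through differentiable points, but this requires verifying both that the thermodynamic limit of the log partition function is the same on balls and tori (a subadditivity computation) and that derivatives of the convex log-partition functions converge appropriately (a standard convex-analysis fact). Without these ensemble-equivalence and convex-derivative ingredients the direct combinatorial extension of a finite packing in $B_n$ to an infinite periodic packing of $\R^d$ loses an unwanted factor of $\omega_d/2^d$ through cubic tiling, which is why the torus route is essential.
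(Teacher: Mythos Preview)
Your torus-and-convexity route is correct, but it is far heavier than what the lemma requires, and your final claim that ``the torus route is essential'' is mistaken.

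The paper's proof is two observations. First, it invokes the standard finite-volume characterization
\[
\theta(d) \;=\; \limsup_{n\to\infty}\ \sup_{X \in \mathcal{P}(B_n,r_d)} \frac{|X|}{n},
\]
where $\mathcal{P}(B_n,r_d)$ is the set of center configurations in $B_n$ with pairwise distances exceeding $2r_d$; this holds because boundary effects are negligible (the volume of a $d$-ball grows subexponentially in its radius). Second, for each fixed $n$ and $\lambda$ the expectation $\E_{B_n,\lambda}|\mathbf{X}|$ is at most the maximum $\sup_X |X|$, so $\alpha_{B_n}(\lambda)\le \sup_X |X|/n$; taking $\limsup_n$ finishes. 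No extension of a finite packing to a periodic one is ever performed, so the $\omega_d/2^d$ loss you worry about simply does not arise. The displayed identity is a classical fact (see e.g.\ Rogers' \emph{Packing and Covering}) and does not rest on cubic tiling of inscribed balls.

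What your approach buys is self-containment: you avoid citing the finite-volume characterization of $\theta(d)$, and on the torus you get the clean pointwise bound $\theta(d)\ge\alpha_{\mathcal{T}_L}(\mu)$ for every $L$ and $\mu$ directly from the periodic lift. The cost is importing two other pieces of machinery---equality of ball and torus pressures in the thermodynamic limit, and the convex-analysis lemma on derivatives of pointwise-convergent convex functions---plus the extra care at possible non-differentiability points of $g_d$. The paper's argument sidesteps all of this by never taking a thermodynamic limit of $\alpha$: it compares an expectation to a maximum at each finite $n$ and only then passes to the $\limsup$.
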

\begin{proof}
 First note that 
\begin{align}
\label{eqthetaSup}
\theta(d) &= \limsup_{n \to \infty} \sup_{ X \in \mathcal P(B_n,r_d)}  \frac{|X|}{n}
\end{align}
where $\mathcal P(B_n,r_d)$ is the set of all packings of $B_n$ by spheres of radius $r_d$ (where again only the centers need be in $B_n$); that is, sets of distinct points $X \subset B_n$ so that $d(x_i,x_j) >2 r_d$ for all distinct $x_i,x_j \in X$. 
The equality~\eqref{eqthetaSup} relies on the fact that volume of a ball in $\R^d$ grows subexponentially fast as a function of its radius, and so deleting centers from the boundary of $B_n$ has a negligible effect on the packing density as $n \to \infty$.  Now from the definition of the expected packing density, $\sup_{ X \in \mathcal P(B_n,r_d)}  \frac{|X|}{n} \ge  \alpha_{B_n}(\lam)$ for any $\lam$.
\end{proof}

Our main  result is the following lower bound on the expected packing density.
\begin{theorem}
\label{thmAlpaLB}
Let $S\subset \R^d$ be bounded,  measurable, and  of positive volume. 
Then for any $\lam \ge  3^{-d/2}$, we have
\begin{align*}
\alpha_{S}(\lam) &\ge (1+o_d(1)) \frac{\log (2/\sqrt{3}) \cdot d  }{ 2^d} \,.
\end{align*}
\end{theorem}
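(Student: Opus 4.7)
My plan is to combine the Mecke (Georgii--Nguyen--Zessin) identity for Gibbs point processes with a local conditional analysis around a generic point of $S$, adapting the ``occupancy method'' of \cite{davies2016average} from triangle-free graphs to the continuous hard-sphere setting.

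\emph{Step 1 (Mecke identity).} Applying the Mecke formula to the hard-sphere Gibbs measure yields
\[
\alpha_S(\lambda) = \frac{\lambda}{\vol(S)}\int_S \P_{S,\lambda}\bigl[\mathbf{X}\cap B_{2r_d}(y)=\emptyset\bigr]\,dy = \frac{\lambda\,\E_{S,\lambda}[\vol(F_\mathbf{X})]}{\vol(S)},
\]
where $F_\mathbf{X} := S\setminus\bigcup_{x\in\mathbf{X}} B_{2r_d}(x)$ is the free volume. Lower-bounding $\alpha_S(\lambda)$ is therefore equivalent to lower-bounding $\E\vol(F_\mathbf{X})$.

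\emph{Step 2 (Local conditioning).} For each $v\in S$, let $\mathbf{X}' := \mathbf{X}\setminus B_{2r_d}(v)$. Given $\mathbf{X}'$, the conditional distribution of $\mathbf{X}\cap B_{2r_d}(v)$ is the grand canonical hard-sphere process at fugacity $\lambda$ on the allowed region
\[
A_v \;:=\; B_{2r_d}(v)\setminus\bigcup_{x\in\mathbf{X}'} B_{2r_d}(x) \;\subseteq\; B_{2r_d}(v),
\]
with partition function $Z_{A_v}(\lambda)$ and conditional expected count $\lambda\,(\log Z_{A_v})'(\lambda)$.

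\emph{Step 3 (Local log-partition inequality).} The crux is to establish the local bound
\[
\lambda(\log Z_{A_v})'(\lambda) \;\ge\; (1+o_d(1))\,\frac{\vol(A_v)}{2^d}\,\log\bigl(1+\lambda\cdot 2^d\bigr)
\]
for $\lambda\ge 3^{-d/2}$. In the triangle-free graph analog of \cite{davies2016average}, the neighborhood $N(v)$ is an independent set, so the local partition function takes the clean form $\lambda+(1+\lambda)^{|F_v|}$ and admits a direct Jensen-type analysis. Here the ``exclusion graph'' on $B_{2r_d}(v)$ is not triangle-free, and I expect to substitute high-dimensional geometry for the triangle-freeness: the core $B_{\sqrt 3\,r_d}(v)$ has volume $3^{d/2} = (\sqrt 3/2)^d \cdot 2^d = o_d(2^d)$, so asymptotically it is negligible, while any two compatible centers $y_1, y_2$ outside the core (both at distance $\ge \sqrt 3\,r_d$ from $v$) must span (by the law of cosines) an angle at least $\arccos(1/3)$ at $v$. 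This shell-plus-core decomposition should reduce the local hard-sphere computation to a generalized single-occupancy estimate, yielding the logarithmic factor $\log(1+\lambda\cdot 2^d)$; the threshold $\lambda\ge 3^{-d/2}$ is precisely where this bound becomes effective.

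\emph{Step 4 (Global averaging).} Integrating the local inequality over $v\in S$ and using the identities
\[
\int_S \E\bigl[|\mathbf{X}\cap B_{2r_d}(v)|\bigr]\,dv = 2^d\,\E|\mathbf{X}| + O(\partial S), \qquad \int_S \E\vol(A_v)\,dv = 2^d\,\E\vol(F_\mathbf{X}) + O(\partial S),
\]
together with Step 1, yields
\[
\alpha_S(\lambda) \;\ge\; (1+o_d(1))\,\frac{\log(1+\lambda\cdot 2^d)}{2^d}.
\]
At $\lambda = 3^{-d/2}$ we have $\log(1+(4/3)^{d/2}) = (1+o_d(1))\,d\log(2/\sqrt 3)$, recovering the claimed bound; monotonicity of $\alpha_S$ in $\lambda$ extends it to all $\lambda \ge 3^{-d/2}$, and the boundary corrections are absorbed into $o_d(1)$ for the statement of the theorem.

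\emph{Main obstacle.} Step 3 is where the essential difficulty lies. Because the exclusion graph on $B_{2r_d}(v)$ contains triangles (three mutually compatible centers can coexist in $B_{2r_d}(v)$), the exact local partition function $\lambda + (1+\lambda)^{|F_v|}$ of the triangle-free analysis is unavailable; establishing the log-partition inequality with the correct constant $\log(2/\sqrt 3)$ requires carefully exploiting the high-dimensional volume concentration and the $\arccos(1/3)$ angular constraint to recover an ``effectively triangle-free'' structure on the shell, and then converting this into a lower bound on $(\log Z_{A_v})'(\lambda)$.
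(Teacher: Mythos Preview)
Your Steps~1 and~2 match the paper: the free-volume identity is Lemma~\ref{lem:alphaFV}, your $A_v$ is the paper's random set $\mathbf T$, and the conditional law is the spatial Markov property used in Proposition~\ref{propAlphas}. But Step~4, as written, is circular. Granting Step~3 and your two averaging identities, you obtain (ignoring boundary) $2^d\,\E|\mathbf X|\ge (1+o_d(1))\log(1+\lambda 2^d)\cdot\E\vol(F_{\mathbf X})$. Invoking Step~1 then substitutes $\E\vol(F_{\mathbf X})=\vol(S)\,\alpha_S/\lambda$ while $\E|\mathbf X|=\vol(S)\,\alpha_S$, so $\alpha_S$ cancels from both sides and you are left with the true but vacuous inequality $\lambda\cdot 2^d\ge(1+o_d(1))\log(1+\lambda 2^d)$. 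Both sides of your averaged local bound are multiples of $\alpha_S$; no lower bound on $\alpha_S$ results. (A secondary issue: the $O(\partial S)$ terms are not $o_d(1)$ uniformly over arbitrary bounded measurable $S\subset\R^d$, whereas the theorem is stated for every such $S$.)

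The paper breaks the circularity by retaining \emph{two} bounds on $\alpha_S$, both expressed through the single scalar $z:=\E\log Z_{\mathbf T}(\lambda)$. From~\eqref{eqalp1} and Jensen, $\alpha_S\ge\lambda e^{-z}$. From~\eqref{eqalp2}, combined with a crude estimate $\alpha_{\mathbf T}(\lambda)\ge\lambda e^{-2\lambda\cdot 3^{d/2}}$ and the trivial bound $\log Z_{\mathbf T}\le\lambda\,\vol(\mathbf T)$, one gets $\alpha_S\ge 2^{-d}e^{-2\lambda\cdot 3^{d/2}}\,z$. One bound decreases in $z$, the other increases; minimizing the maximum over $z$ (a Lambert-$W$ computation) at $\lambda$ slightly below $3^{-d/2}$ yields the theorem, and monotonicity in $\lambda$ (Lemma~\ref{lemMonotone}) extends it to all $\lambda\ge 3^{-d/2}$. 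The crude bound on $\alpha_{\mathbf T}$ is not obtained via your shell/angular ``effectively triangle-free'' analysis---which you correctly flag as the main obstacle and do not carry out---but from a single ball-intersection estimate (Lemma~\ref{lemmaT}): for $T\subseteq B_{2r_d}(0)$ and $\mathbf u$ uniform on $T$, $\E[\vol(B_{2r_d}(\mathbf u)\cap T)]\le 2\cdot 3^{d/2}$, plugged into~\eqref{eqAlpLb1}. So the pointwise local inequality of your Step~3 is neither proved nor needed; what your outline is missing is the two-sided optimization in $z$ that replaces it.
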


As a corollary, by applying Theorem~\ref{thmAlpaLB} to $B_n$, we obtain the following lower bound on the sphere packing density of the $d$-dimensional Euclidean space.

\begin{cor}
\[
 \theta(d)\ge(1+o_d(1)) \frac{\log (2/\sqrt{3}) \cdot d  }{ 2^d}\, .
\]
\end{cor}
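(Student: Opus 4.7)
The plan is to use the ``occupancy method'' of~\cite{davies2016average}, adapted to the continuum setting. Write $E_v := \{\mathbf{X} \cap B_{2r_d}(v) = \emptyset\}$ for the event that a new sphere centered at $v$ could be added. The first step is the occupancy formula
\begin{equation*}
\alpha_S(\lam) = \frac{\lam}{\vol(S)}\int_S \pr_{S,\lam}[E_v]\,dv,
\end{equation*}
a direct consequence of the Campbell--Mecke formula for the Poisson point process underlying the grand canonical hard sphere model; this reduces the theorem to a lower bound on $\pr_{S,\lam}[E_v]$ (on average over $v \in S$).

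The second step uses the spatial Markov property of the hard sphere model. Conditional on the external configuration $\mathbf{X}'' := \mathbf{X}\setminus B_{2r_d}(v)$, the points of $\mathbf{X}$ inside $B_{2r_d}(v)$ are distributed as a hard sphere model at fugacity $\lam$ on the accessible region
\begin{equation*}
A_v := (B_{2r_d}(v)\cap S)\setminus\bigcup_{y\in\mathbf{X}''}B_{2r_d}(y).
\end{equation*}
Hence $\pr_{S,\lam}[E_v\mid \mathbf{X}''] = 1/Z_{A_v}(\lam)$, and Jensen's inequality applied to the convex map $t\mapsto e^{-t}$ gives $\pr_{S,\lam}[E_v] \ge \exp(-\E[\log Z_{A_v}(\lam)])$, so the problem reduces to an upper bound on $\E[\log Z_{A_v}(\lam)]$.

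The crucial third step is the sharp averaged estimate
\begin{equation*}
\frac{1}{\vol(S)}\int_S \E[\log Z_{A_v}(\lam)]\,dv \le (1+o_d(1))\cdot 2^d\alpha_S(\lam).
\end{equation*}
To establish this I would use the integral representation $\log Z_{A_v}(\lam) = \int_0^\lam \E_{A_v,\mu}[|\mathbf{Y}|]/\mu\,d\mu$ combined with stochastic domination (tying $\E_{A_v,\mu}[|\mathbf Y|]$ to the local density of the ambient process) and the occupancy formula applied at the intermediate fugacities $\mu\le \lam$. Combining this with the earlier steps and a second (averaging) application of Jensen's inequality yields the self-consistent inequality $\alpha_S(\lam) \ge \lam\exp\bigl(-(1+o_d(1))\cdot 2^d\alpha_S(\lam)\bigr)$. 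Setting $\beta := (1+o_d(1))\cdot 2^d\alpha_S(\lam)$ this is equivalent to $\beta e^\beta \ge (1+o_d(1))\cdot 2^d\lam$, which at $\lam = 3^{-d/2}$ reads $\beta e^\beta \ge (1+o_d(1))\cdot (2/\sqrt 3)^d$. Via the Lambert $W$ asymptotic $W(x)=\log x - \log\log x + O(1)$ the solution satisfies $\beta\ge (1+o_d(1))\cdot d\log(2/\sqrt 3)$, yielding the claimed bound; monotonicity of $\lam\mapsto\alpha_S(\lam)$ (a consequence of convexity of $\log Z_S(\lam)$) extends it to all $\lam \ge 3^{-d/2}$.

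The main obstacle is the sharp bound of Step~3. A naive mean-field bound $\E[\log Z_{A_v}(\lam)] \le \lam\vol(A_v) \le 2^d\lam$ only yields a self-consistent relation of the form $\alpha_S(\lam) \ge \lam/(1+O(2^d\lam))$, which at $\lam = 3^{-d/2}$ gives merely the trivial $\Omega(2^{-d})$ bound. To recover the extra factor of $d$ (and the precise constant $\log(2/\sqrt 3)$) one must carefully exploit the hard sphere constraint inside $A_v$, tying $\E[\log Z_{A_v}(\lam)]$ directly to $\alpha_S(\lam)$ rather than to $\lam$; the threshold $\lam\ge 3^{-d/2}$ in the hypothesis corresponds exactly to the regime where this sharper estimate produces the $d\cdot 2^{-d}$ scaling via the Lambert $W$ function.
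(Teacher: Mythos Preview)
Your overall architecture matches the paper's: the occupancy identity $\alpha_S(\lam)=\lam\,\E[1/Z_{\bT}(\lam)]$ (your $A_v$ is the paper's $\bT$), Jensen to get $\alpha\ge\lam\,e^{-z}$ with $z=\E[\log Z_{\bT}(\lam)]$, a companion inequality tying $z$ back to $\alpha$, and the Lambert~$W$ endgame. The gap is in your Step~3.

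The paper's route to the bound $z\le(1+o_d(1))\,2^d\alpha$ is \emph{not} via the integral representation of $\log Z_{\bT}$ or any stochastic domination between fugacities. It uses, in order, (i)~the double-counting inequality $\alpha\ge 2^{-d}\,\E\bigl[\E_{\bT,\lam}|\mathbf Y|\bigr]$; (ii)~a purely geometric lemma: for any measurable $T\subseteq B_{2r_d}(0)$ and $\mathbf u$ uniform in $T$, one has $\E[\vol(B_{2r_d}(\mathbf u)\cap T)]\le 2\cdot 3^{d/2}$ (this is where $3^{d/2}$ enters, as the maximal intersection volume of two $2r_d$-balls whose centers lie in a common $2r_d$-ball); and (iii)~the consequence $\alpha_T(\lam)\ge\lam\,e^{-2\lam\cdot 3^{d/2}}$, i.e.\ the hard sphere model on any subset of a $2r_d$-ball is nearly Poisson when $\lam\cdot 3^{d/2}$ is small. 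Chaining (i)--(iii) with the trivial $\log Z_T(\lam)\le\lam\,\vol(T)$ yields $z\le 2^d e^{2\lam\cdot 3^{d/2}}\alpha$, and the paper then takes $\lam=d^{-1}3^{-d/2}$ so that the exponential factor is $1+o_d(1)$; monotonicity in $\lam$ extends the conclusion upward.

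Your outline does not contain this geometric ingredient, and I do not see how to prove the Step~3 inequality by the mechanism you describe. Writing $\log Z_{A_v}(\lam)=\int_0^\lam \E_{A_v,\mu}|\mathbf Y|\,d\mu/\mu$ and bounding $\E_{A_v,\mu}|\mathbf Y|\le \E_{A_v,\lam}|\mathbf Y|$ leaves a divergent $\int_0^\lam d\mu/\mu$; bounding instead by $\mu\,\vol(A_v)$ is exactly the naive mean-field estimate you correctly flag as too weak. More tellingly, your claimed inequality $z\le(1+o_d(1))\,2^d\alpha$ carries no explicit $\lam$-dependence, so if it held for all $\lam$ then at $\lam=1$ your Lambert~$W$ step would give $\alpha\ge(1+o_d(1))\,d\log 2\cdot 2^{-d}$, a strictly better constant than $\log(2/\sqrt 3)$; this is a strong indication that the inequality cannot be uniform in $\lam$ and that the $3^{-d/2}$ threshold is not merely ``where the Lambert~$W$ asymptotics kick in'' but is forced by the geometry in (ii) above.
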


The fact that we achieve the bound in Theorem~\ref{thmAlpaLB} for $\lam$ as small as $3^{-d/2}$ has no implication on the bound obtained on $\theta(d)$, but it allows us to prove  non-trivial lower bounds on the entropy density and pressure. 

\begin{theorem}
\label{thmFElb}
For all $\lam = e^{-cd}$ with $c \in  [\frac{\log 3}{2}, \log 2)$,  
\begin{align*}
g_d(\lam) &\ge \left(\frac{(\log 2 - c)^2}{2}+o_d(1) \right ) \cdot \frac{ d^2}{2^d} \, .
\end{align*}
\end{theorem}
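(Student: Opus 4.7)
The plan is to integrate the lower bound on the expected packing density from Theorem~\ref{thmAlpaLB} against $d\lambda/\lambda$ to recover the pressure $g_d$. Starting from identity \eqref{eq:alogZ} applied to $S = B_n$ (so $\vol(B_n) = n$), we have $\alpha_{B_n}(\lambda) = \frac{\lambda}{n}(\log Z_{B_n}(\lambda))'$. Rearranging and integrating from $\lambda_0$ to $\lambda$ gives
\begin{align*}
\frac{1}{n} \log Z_{B_n}(\lambda) = \frac{1}{n} \log Z_{B_n}(\lambda_0) + \int_{\lambda_0}^\lambda \frac{\alpha_{B_n}(t)}{t} \, dt,
\end{align*}
and since $Z_{B_n}(\lambda_0) \ge 1$ (from the empty configuration), dropping the first term on the right and sending $n \to \infty$ produces
\begin{align*}
g_d(\lambda) \ge \liminf_{n \to \infty} \int_{\lambda_0}^\lambda \frac{\alpha_{B_n}(t)}{t} \, dt.
\end{align*}

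To match the target rate $d^2/2^d$, the key input is a $\lambda$-dependent refinement of Theorem~\ref{thmAlpaLB}: for all $t \ge 2^{-d}$ (equivalently, $t = e^{-c'd}$ with $c' \le \log 2$),
\begin{align*}
\alpha_{B_n}(t) \ge (1 + o_d(1)) \frac{\log(t \cdot 2^d)}{2^d}.
\end{align*}
At $t = 3^{-d/2}$ this specializes to $(1 + o_d(1)) \frac{d \log(2/\sqrt{3})}{2^d}$, matching Theorem~\ref{thmAlpaLB} exactly, and it decreases to $0$ as $t \to 2^{-d}$. Since the constant $\log(2/\sqrt{3})$ is merely $d^{-1} \log(3^{-d/2} \cdot 2^d)$, I would expect this sharper $t$-dependent form to fall out of the proof of Theorem~\ref{thmAlpaLB} upon inspection rather than requiring a new argument. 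This step is the main obstacle: because $g_d$ is only monotone non-decreasing and the $\lambda$ range of interest is $\lambda \in (2^{-d}, 3^{-d/2}]$ (so $\lambda < 3^{-d/2}$ whenever $c > \log 3/2$), the bare statement of Theorem~\ref{thmAlpaLB} cannot simply be transferred down to smaller $\lambda$, and genuine access to the refined $t$-dependent bound is essential.

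Finally, taking $\lambda_0 = 2^{-d}$ (so the lower bound on the integrand vanishes at the left endpoint), Fatou's lemma and the substitution $u = \log(t \cdot 2^d)$, $du = dt/t$, give
\begin{align*}
\int_{2^{-d}}^\lambda \frac{\log(t \cdot 2^d)}{2^d \cdot t} \, dt = \frac{1}{2^d} \int_0^{\log(\lambda \cdot 2^d)} u \, du = \frac{[\log(\lambda \cdot 2^d)]^2}{2 \cdot 2^d}.
\end{align*}
For $\lambda = e^{-cd}$ we have $\log(\lambda \cdot 2^d) = d(\log 2 - c)$, and hence
\begin{align*}
g_d(\lambda) \ge (1 + o_d(1)) \frac{(\log 2 - c)^2}{2} \cdot \frac{d^2}{2^d},
\end{align*}
which is the desired bound. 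The upper endpoint $c < \log 2$ ensures $\lambda > \lambda_0$, so the integral is over a nontrivial range, while the lower endpoint $c \ge \log 3/2$ is what is needed to invoke the bound of Theorem~\ref{thmAlpaLB} at its stated threshold (and is automatic from the interpolation at $t = 3^{-d/2}$).
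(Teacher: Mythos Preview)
Your approach is essentially identical to the paper's: integrate the identity $\alpha_{B_n}(t)/t = \tfrac{1}{n}(\log Z_{B_n}(t))'$ and plug in a $t$-dependent lower bound on $\alpha_{B_n}(t)$. The refinement you anticipate is exactly what the paper records as inequality~\eqref{eqalphaGenbound}, namely $\alpha_S(e^{-c'd}) \ge (1+o_d(1))\frac{(\log 2 - c')d}{2^d}$ for $c' \in (\tfrac{\log 3}{2},\log 2)$, which after the substitution $t=e^{-c'd}$ is precisely your $\alpha_{B_n}(t)\ge(1+o_d(1))\frac{\log(t\cdot 2^d)}{2^d}$; your guess that this falls out of the proof of Theorem~\ref{thmAlpaLB} is correct. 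One small imprecision: you state the refined bound ``for all $t\ge 2^{-d}$'', but the argument in the paper only yields it for $t\in(2^{-d},3^{-d/2}]$ (for larger $t$ the factor $e^{2\lambda\cdot 3^{d/2}}$ in $z^*$ blows up); this is harmless here since your integration runs over $t\in[2^{-d},\lambda]\subseteq[2^{-d},3^{-d/2}]$, which is exactly the range where \eqref{eqalphaGenbound} applies.
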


\begin{theorem}
\label{thmCanonBound}
There exists $\alpha = \alpha(d) = (1+o_d(1))  \frac{ \log(2/\sqrt{3}) \cdot d }{2^d} $ so that 
\begin{align*}
f_d(\alpha) &\ge -   (1+o_d(1)) \log(2/\sqrt{3}) \cdot d   \, .
\end{align*}
\end{theorem}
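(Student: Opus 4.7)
The plan is to invoke the grand canonical hard sphere model at a suitable fugacity $\lambda^*$ whose typical packing density on $B_n$ matches the target $\alpha^* = (1+o_d(1))\log(2/\sqrt{3})\,d/2^d$, and to convert this into a lower bound on $\hat Z_{B_n}(\alpha^* n)$ through the identity
\[
\hat Z_{B_n}(k) \;=\; \frac{\pr_{B_n,\lambda}[|\mathbf{X}|=k]\, Z_{B_n}(\lambda)}{\lambda^k}
\]
built into the grand canonical ensemble.

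First, set $\lambda_0 = 3^{-d/2}$ and apply Theorem~\ref{thmAlpaLB} to get $\alpha_{B_n}(\lambda_0) \geq (1-o_d(1))\log(2/\sqrt{3})\,d/2^d$ for $n$ large. The function $\lambda \mapsto \alpha_{B_n}(\lambda)$ is continuous (rational in $\lambda$), non-decreasing (since $\tfrac{d}{d\lambda}\E_{B_n,\lambda}[|\mathbf{X}|] = \var_{B_n,\lambda}[|\mathbf{X}|]/\lambda \geq 0$), and vanishes at $\lambda=0$. By the intermediate value theorem there is therefore $\lambda_n^* \in (0,\lambda_0]$ with $\alpha_{B_n}(\lambda_n^*) = \alpha^*$ exactly, where I take $\alpha^* := (1-\eta_d)\log(2/\sqrt{3})\,d/2^d$ and $\eta_d \to 0$ slowly enough to absorb the $o_d(1)$ error from Theorem~\ref{thmAlpaLB}.

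Next I would establish concentration of $|\mathbf{X}|$ around its mean $\alpha^* n$ under $\pr_{B_n,\lambda_n^*}$, using a variance bound $\var_{B_n,\lambda_n^*}[|\mathbf{X}|] = O(n)$ (standard in the low-fugacity regime, since $\lambda_n^* \le 3^{-d/2}$ is deep in the analytic regime of the pressure for large $d$). Chebyshev's inequality then localizes $|\mathbf{X}|$ to a window of width $o(n)$ around $\alpha^* n$ with probability $1-o(1)$, and a pigeonhole within this window produces $k_n^*$ with $k_n^*/n \to \alpha^*$ and $\pr_{B_n,\lambda_n^*}[|\mathbf{X}|=k_n^*] \geq 1/\mathrm{poly}(n)$. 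Substituting into the grand canonical identity, taking logs, dividing by $k_n^*$, applying Stirling to $n^{k_n^*}/(k_n^*)!$, and letting $n \to \infty$ along a subsequence where $\lambda_n^* \to \lambda^* \in (0,\lambda_0]$ yields
\[
f_d(\alpha^*) \;\geq\; \frac{g_d(\lambda^*)}{\alpha^*} - \log\lambda^* - 1 + \log\alpha^*,
\]
since $\tfrac{1}{k_n^*}\log\pr_{B_n,\lambda_n^*}[|\mathbf{X}|=k_n^*] = O(\log n / n) \to 0$.

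Finally, using the trivial bound $g_d(\lambda^*) \geq 0$ (from $Z_{B_n}(\lambda) \geq 1$), together with $\lambda^* \leq 3^{-d/2}$ (so $-\log\lambda^* \geq (d\log 3)/2$) and $\log\alpha^* = -d\log 2 + O(\log d)$, I compute
\[
f_d(\alpha^*) \;\geq\; \tfrac{d\log 3}{2} - 1 + \log\alpha^* \;=\; -d\bigl(\log 2 - \tfrac{\log 3}{2}\bigr) + O(\log d) \;=\; -(1+o_d(1))\log(2/\sqrt{3})\cdot d,
\]
which matches the claim. \textbf{The main obstacle} is the concentration step: a naive pigeonhole over all possible values $|\mathbf{X}| \in \{0,1,\dots,O(n\cdot 2^d)\}$ does not confine $k_n^*$ near the mean, so one must either import the variance bound $\var[|\mathbf{X}|]=O(n)$ from a cluster-expansion analysis of the hard sphere model at $\lambda \leq 3^{-d/2}$, or show log-concavity of $k \mapsto \lambda^k \hat Z_{B_n}(k)$, which would automatically place the mode near the mean.
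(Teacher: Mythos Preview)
Your overall architecture---pass from the grand canonical model to the canonical one via $\hat Z_{B_n}(k) = \pr[|\mathbf X|=k]\,Z_{B_n}(\lambda)/\lambda^k$, then Chebyshev plus pigeonhole to find a good $k$, then Stirling and the trivial bound $Z_{B_n}(\lambda)\ge 1$---is exactly the paper's route, and your final arithmetic is correct.

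The gap is precisely the one you flag as ``the main obstacle'', and neither of your proposed fixes works. The fugacity you need is $\lambda\approx 3^{-d/2}$, which is exponentially \emph{above} the cluster-expansion threshold (the excluded volume is $2^d$, so convergence requires roughly $\lambda\lesssim e^{-1}2^{-d}\ll 3^{-d/2}$); and log-concavity of $k\mapsto \hat Z_{B_n}(k)$ for hard spheres is not known. So as written the concentration step is unsupported.

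The paper sidesteps this with a self-contained averaging trick that you should adopt. Rather than fixing $\lambda$, it searches over the interval $[3^{-d/2},\,2\cdot 3^{-d/2}]$ (note: above $3^{-d/2}$, not below, so Theorem~\ref{thmAlpaLB} applies throughout). From Lemma~\ref{lemMonotone} one has the identity
\[
n\bigl(\alpha_{B_n}(2\cdot 3^{-d/2})-\alpha_{B_n}(3^{-d/2})\bigr)\;=\;\int_{3^{-d/2}}^{2\cdot 3^{-d/2}}\frac{\var_{B_n,t}|\mathbf X|}{t}\,dt\,,
\]
and the left side is at most $n$ since $\alpha_{B_n}\le 1$. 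If the variance exceeded $n^{3/2}$ throughout the interval the right side would be at least $n^{3/2}\log 2>n$, a contradiction. Hence some $\lambda$ in this interval has $\var_{B_n,\lambda}|\mathbf X|\le n^{3/2}$, and Chebyshev with window $n^{4/5}$ goes through. This removes any appeal to cluster expansion or log-concavity. A minor further point: because the resulting $k$ only satisfies $k/n\ge(1+o_d(1))\alpha^*$ rather than equality, the paper invokes the monotonicity of $f_d$ (Lemma~\ref{lem:fdec}) at the end, which replaces your intermediate-value step.
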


The lower bound in Theorem~\ref{thmCanonBound} matches, up to a factor $2$, a formula for the entropy of hard spheres conjectured in the physics literature to hold for densities up to either the crystallization phase transition or the glass transition, whichever comes first~\cite{frisch, Parisi} (see also~\cite{GlassJamming} for an overview of the mean-field approach to hard spheres). 

 Of course even the existence of a sphere packing of density $\Theta(d \cdot 2^{-d})$ implies some positive volume of sphere packings at a slightly lower density by shrinking the spheres and allowing their centers to move locally.  Such a lower bound on the canonical partition function is called the `cell model' lower bound in statistical physics (see e.g.~\cite{lowen2000fun}, Section 4.2). While the cell model is a rigorous lower bound on $\hat Z_S(k)$ at all densities, it is thought to be approximately accurate if the model is in a crystalline phase.
In Section~\ref{sec:compare} we compare the bound from Theorem~\ref{thmCanonBound} to this cell model lower bound, and show that is is significantly stronger.

In fact, to achieve the bound in Theorem~\ref{thmCanonBound} through the existence of a dense packing and the cell model lower bound would require $\theta(d) \ge (2- \eps)^{-d}$ for some $\eps>0$. So in a sense we can say that either there is no crystallization at density $\Theta(d \cdot 2^{-d})$ or there are exponentially better sphere packings than currently known.  We leave precise statements to this effect for future work, but conclude by observing that these two challenging problems in geometry and statistical physics, determining the asymptotic sphere packing density and determining whether or not the hard sphere model exhibits a phase transition, closely complement one another and understanding their relationship may open the way to further progress in both areas.

\section{A lower bound on the expected packing density}
\label{sec:LB}

In this section we prove Theorem~\ref{thmAlpaLB}.  We start with some useful identities and inequalities.

When $\lambda$ is  large, the model favors configurations with more spheres.
It is a standard  fact that $\alpha_S(\lam)$ is strictly increasing in $\lambda$.
\begin{lemma}
\label{lemMonotone}
Let $S\subset \R^d$ be bounded,  measurable, and  of positive volume.
Then the expected packing density $\alpha_S(\lam)$ is a strictly increasing function of $\lam$. 
\end{lemma}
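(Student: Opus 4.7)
The plan is to differentiate $\alpha_S(\lam)$ with respect to $\lam$ and identify the derivative with a variance, which is manifestly non-negative and in fact strictly positive under the hypotheses on $S$.

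Concretely, let $N = |\mathbf X|$ denote the number of sphere centers under the grand canonical hard sphere model on $S$ at fugacity $\lam$, so that $\vol(S)\,\alpha_S(\lam) = \E_{S,\lam}[N]$. Using the partition function $Z_S(\lam) = \sum_{k\ge 0} \lam^k \hat Z_S(k)$ and the identity $\E_{S,\lam}[N] = \lam Z_S'(\lam)/Z_S(\lam)$ from \eqref{eq:aZ}, a direct differentiation gives
\begin{align*}
\lam \cdot \frac{d}{d\lam} \E_{S,\lam}[N]
&= \frac{\lam Z_S'(\lam)}{Z_S(\lam)} + \frac{\lam^2 Z_S''(\lam)}{Z_S(\lam)} - \left(\frac{\lam Z_S'(\lam)}{Z_S(\lam)}\right)^2.
\end{align*}
The three terms are, respectively, $\E[N]$, $\E[N(N-1)]$, and $(\E[N])^2$, so the right-hand side equals $\var_{S,\lam}(N)$. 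Dividing through by $\lam\,\vol(S)$ yields
\begin{align*}
\frac{d}{d\lam} \alpha_S(\lam) = \frac{\var_{S,\lam}(N)}{\lam \cdot \vol(S)}.
\end{align*}

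It remains to check that $\var_{S,\lam}(N) > 0$ for all $\lam > 0$. Since $\hat Z_S(0) = 1$, we have $\pr_{S,\lam}[N = 0] = 1/Z_S(\lam) > 0$. On the other hand $\hat Z_S(1) = \vol(S) > 0$ by hypothesis, so $Z_S(\lam) > 1$ and thus $\pr_{S,\lam}[N \ge 1] = 1 - 1/Z_S(\lam) > 0$ as well. Hence $N$ is non-constant and $\var_{S,\lam}(N) > 0$, giving $\frac{d}{d\lam}\alpha_S(\lam) > 0$, as required.

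No serious obstacle is expected: the key input is the elementary (and standard) identity that the $\lam$-derivative of the mean number of particles in a grand canonical ensemble equals the variance of that number, together with the trivial observation that the empty configuration and at least one non-empty configuration both receive positive probability whenever $\vol(S) > 0$. The only mild care needed is in the differentiation, which is justified because $Z_S(\lam)$ is a polynomial in $\lam$ (as $\hat Z_S(k) = 0$ for all sufficiently large $k$ when $S$ is bounded).
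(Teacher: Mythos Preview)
Your proof is correct and follows essentially the same approach as the paper: differentiate $\alpha_S(\lam)$, multiply through by $\lam\cdot\vol(S)$, and recognize the result as $\var_{S,\lam}(|\mathbf X|)$. You add two small refinements the paper omits, namely an explicit argument that the variance is strictly positive (via $\hat Z_S(0)=1$ and $\hat Z_S(1)=\vol(S)>0$) and a remark that differentiation is justified because $Z_S$ is a polynomial, but the core computation is identical.
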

\begin{proof}
We use~\eqref{eq:alogZ} and calculate 
\begin{align*}
\lam \cdot  \vol(S) \cdot \alpha_S'(\lam)&= \lam^2 ( \log Z_S(\lam))'' +  \lam ( \log Z_S(\lam))' \\
&= \lam^2 \cdot \frac{Z_S(\lam)  Z_S''(\lam) - (Z_S'(\lam))^2   } {Z_S^2(\lam)} + \frac{ \lam Z_S'(\lam)}{Z_S(\lam)} \\
&= \E_{S,\lam}[ |\mathbf X|(|\mathbf X|-1)] - (\E_{S,\lam}[|\mathbf X|]  )^2 + \E_{S,\lam}[ |\mathbf X|] \\
&= \var_{S,\lam} [ |\mathbf X|] > 0 \,,
\end{align*}
and so $\alpha_S(\lam)$ is a strictly increasing.
\end{proof}

Let $\FV_S(\lam)$ denote the expected \emph{free volume} of the hard sphere model; 
that is, 
the expected fraction of the  volume of $S$ containing points that are at distance at least $2 r_d$ from the nearest center; or in other words, the expected fraction of volume at which a new sphere could be legally placed.
A key fact in our argument is the following link between $\alpha_S(\lam)$ and $\FV_S(\lam)$.
\begin{lemma}\label{lem:alphaFV}
Let $S\subset \R^d$ be bounded,  measurable, and  of positive volume.
Then
\[ \alpha_S(\lam) = \lam \cdot \FV_S(\lam). \]
\end{lemma}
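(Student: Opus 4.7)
The plan is to compute $\E_{S,\lam}[\FV_S]$ directly from the definition of the grand canonical ensemble and identify it, via a reindexing trick, with $\alpha_S(\lam)/\lam$ as computed in~\eqref{eq:aZ}. The key observation is that conditioning on a configuration $\mathbf X = \{x_1,\ldots,x_k\} \in P_k(S)$, the free volume at that configuration is $\frac{1}{\vol(S)}\int_S \mathbf 1[d(y,x_i) > 2r_d \text{ for all } i]\, dy$, and the indicator inside the integral together with $\mathbf 1_{\cD(x_1,\ldots,x_k)}$ is exactly $\mathbf 1_{\cD(x_1,\ldots,x_k,y)}$ — that is, the indicator that appending $y$ yields a $(k+1)$-point packing. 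This links the free volume at $k$ points to the partition function coefficient at $k+1$ points.

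More concretely, the plan is as follows. First, I would write
\[
\FV_S(\lam) \;=\; \frac{1}{\vol(S)}\, \E_{S,\lam}\!\left[\int_S \mathbf 1[d(y,\mathbf X) > 2r_d]\, dy\right]
\;=\; \frac{1}{\vol(S)\, Z_S(\lam)} \sum_{k \ge 0} \frac{\lam^k}{k!} \int_{S^k} \mathbf 1_{\cD(x_1,\ldots,x_k)} \int_S \mathbf 1[d(y,x_i)>2r_d\ \forall i]\, dy\, dx_1\cdots dx_k,
\]
expanding the grand canonical expectation in the standard way. Next, I would fold the $y$-integration into the $(k+1)$-fold product by the identity $\mathbf 1_{\cD(x_1,\ldots,x_k)}\cdot \mathbf 1[d(y,x_i)>2r_d\ \forall i] = \mathbf 1_{\cD(x_1,\ldots,x_k,y)}$, so that the inner multiple integral equals $(k+1)!\, \hat Z_S(k+1)$.

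Substituting and reindexing $k \mapsto k-1$ then gives
\[
\vol(S)\cdot \FV_S(\lam) \;=\; \frac{1}{Z_S(\lam)} \sum_{k \ge 0} \frac{\lam^k\, (k+1)!}{k!}\, \hat Z_S(k+1) \;=\; \frac{1}{\lam\, Z_S(\lam)} \sum_{k \ge 1} k\, \lam^k\, \hat Z_S(k) \;=\; \frac{Z_S'(\lam)}{Z_S(\lam)}.
\]
Comparing with~\eqref{eq:aZ}, namely $\alpha_S(\lam) = \frac{\lam Z_S'(\lam)}{\vol(S)\, Z_S(\lam)}$, yields the desired identity $\alpha_S(\lam) = \lam \cdot \FV_S(\lam)$.

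There is no serious obstacle: the only thing to be careful about is the bookkeeping when trading a $k$-point integration with an integrated indicator against a $(k+1)$-point integration, including the combinatorial $(k+1)!/k!$ factor that arises from treating ordered tuples on $S^{k+1}$ versus unordered $(k+1)$-tuples. This is essentially a special case of the Mecke / Georgii–Nguyen–Zessin equation for Gibbs point processes, and the computation is entirely self-contained from the definitions given.
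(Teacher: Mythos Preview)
Your proposal is correct and is essentially the same argument as the paper's, just run in the opposite direction: the paper starts from $\alpha_S(\lam)=\frac{1}{\vol(S)}\sum_{k\ge0}(k+1)\pr[|\mathbf X|=k+1]$, peels off one integration variable $x_0$ from the $(k+1)$-fold integral, and recognizes the result as $\lam\cdot\FV_S(\lam)$, whereas you start from $\FV_S(\lam)$, fold in the extra variable, and route through $Z_S'(\lam)/Z_S(\lam)$ and~\eqref{eq:aZ}. The core identity $\mathbf 1_{\cD(x_1,\ldots,x_k)}\cdot\mathbf 1[d(y,x_i)>2r_d\ \forall i]=\mathbf 1_{\cD(x_1,\ldots,x_k,y)}$ and the resulting reindexing are identical in both.
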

\begin{proof}
We simply use the definition of $\alpha_S(\lam)$ and compute
\begin{align*}
\alpha_S(\lam)
&=\frac{\E_{S,\lam} |\bX |}{\vol(S)}\\
&= \frac{1}{\vol(S)}\sum_{k=0}^\infty (k+1) \pr_{S,\lam}[|\bX|=k+1] \\
&= \frac{1}{\vol(S)Z_S(\lam)}\sum_{k=0}^\infty  \int_{S^{k+1}} \frac{\lam^{k+1}}{k!}   \1_{\cD(x_0,\ldots,x_k)}  \, dx_1 \cdots dx_kdx_0   \\
&= \frac{\lam}{\vol(S)Z_S(\lam)}  \int_S \left [ 1+\sum_{k=1}^\infty  \int_{S^k} \frac{\lam^{k}}{k!} \1_{\cD(x_0,\ldots,x_k)}   \, dx_1 \cdots dx_k  \right ]\, dx_0  \\
&= \lam \cdot \FV_S(\lam).
\end{align*}
\end{proof}

Now consider the following two-part experiment: sample a configuration of centers $\mathbf X$ from the hard sphere model on $S$ at fugacity $\lam$ and independently choose a point $\mathbf v$ uniformly from~$S$. 
We define the random set 
\[ \mathbf T = \{ x \in B_{2r_d}(\mathbf v)\cap S : d(x,y) > 2r_d \,\, \forall  \, y\in \mathbf X\cap B_{2r_d}(\bv)^c \} . \] 
That is, $\mathbf T$ is the set of all points of $S$ in the $2r_d$ ball around $\mathbf v$ that are not blocked from being a center by a center outside the $2r_d$ ball around $\mathbf v$. 
We call $\mathbf T$ the set of externally uncovered points in the neighborhood of $\bv$, in analogy with the terminology used in~\cite{Davies2015,davies2016average} in the discrete case.
Note that $\mathbf T$ depends only on $\mathbf X \cap B_{ 2 r_d}(\mathbf v)^c$ -- the presence or absence of centers inside $B_{ 2 r_d}(\mathbf v)$ has no effect on $\mathbf T$ (see Figure~\ref{figExternal}). 

\begin{figure}[h]
\centering
\begin{tikzpicture}[scale=.8]

\draw[fill,black!40] (0,0) circle (2);
\draw[thick] (0,0) circle (0.05);
\draw (0,0) node[anchor=north east] {$\bv$};

\draw[thick,fill] (2,1.5) circle (0.06);
\draw[thick,fill] (3,-.5) circle (0.06);
\draw[thick,fill] (1,-2.5) circle (0.06);
\draw[thick,fill] (-2.5,1.1) circle (0.06);
\draw[thick,fill] (-1,-1) circle (0.06);

\begin{scope}
\clip (0,0) circle (2);

\draw[thick,fill,black!20] (2,1.5) circle (2);
\draw[thick,fill,black!20] (3,-.5) circle (2);
\draw[thick,fill,black!20] (1,-2.5) circle (2);
\draw[thick,fill,black!20] (-2.5,1.1) circle (2);

\end{scope}

\draw[very thick] (2,1.5) circle (2);
\draw[very thick] (3,-.5) circle (2);
\draw[very thick] (1,-2.5) circle (2);
\draw[very thick] (-2.5,1.1) circle (2);

\draw[very thick, dashed] 
(2,1.5) circle (1)
(3,-.5) circle (1)
(1,-2.5) circle (1)
(-2.5,1.1) circle (1);

\draw[very thick] (0,0) circle (2);

\draw[thick,dashed] (2,1.5) -- node[very near end, right ]{$2r_d$} +(120:2cm);

\end{tikzpicture}
\caption{An illustration of the set of externally uncovered points in the neighborhood of $\bv$ (shaded dark gray). The dashed circles represent the hard spheres which do not overlap.}
\label{figExternal}
\end{figure}
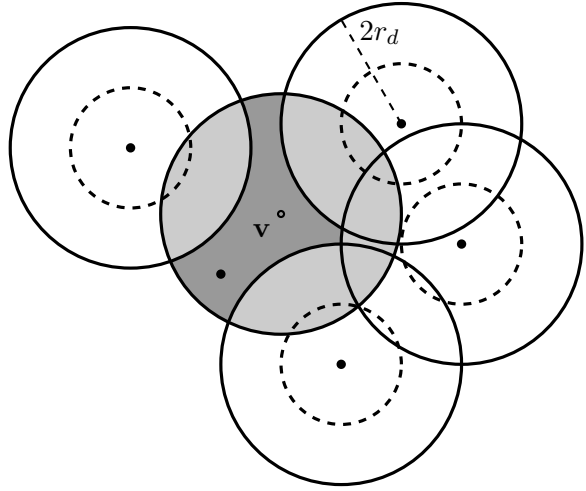

Since $\bX$ is a finite set of points it is clear that there exists some $\eps>0$ (depending on $\bX$) such that $B_\eps(\bv)\cap S\subseteq \bT$. If $S$ has positive volume then it is not difficult to show that $B_\eps(\bv)\cap S$ has positive volume almost surely and hence that $\vol (\bT)>0$ almost surely.

\begin{prop}
\label{propAlphas}
Let $S\subset \R^d$ be bounded,  measurable, and  of positive volume.
Then
\begin{align}
\label{eqalp1}
\alpha_S(\lam) &= \lam \cdot \E \left[ \frac{1}{Z_{\bT}(\lam)}  \right ]\,
\end{align}   
and
\begin{align}
\label{eqalp2}
\alpha_{S}(\lam) &\ge 2^{-d}  \cdot \E \left [ \frac{\lam \cdot Z_{\bT}^\prime(\lam)}{Z_{\bT}(\lam)} \right ] \, ,
\end{align}
where both expectations are with respect to the random set $\bT$ generated by the two-part experiment defined above. 
\end{prop}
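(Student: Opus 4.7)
The plan is to exploit the spatial Markov property of the hard sphere model. With $\bX_{\text{ext}} := \bX \cap B_{2r_d}(\bv)^c$ and $\bX_{\text{int}} := \bX \cap B_{2r_d}(\bv)$, the key observation is that conditional on $(\bv, \bX_{\text{ext}})$ the set $\bX_{\text{int}}$ is distributed as the grand canonical hard sphere model on $\bT$ at fugacity $\lam$. This is because the unnormalized Gibbs density $\lam^{|\bX|}\1_{\cD(\bX)}$ factors: fixing $\bX_{\text{ext}}$, the conditional density of $\bX_{\text{int}}$ is proportional to $\lam^{|\bX_{\text{int}}|}\1_{\cD(\bX_{\text{int}})}\1_{\bX_{\text{int}} \subseteq \bT}$, whose normalizing constant is exactly $Z_{\bT}(\lam)$.

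For \eqref{eqalp1}, I apply Lemma~\ref{lem:alphaFV} to write $\alpha_S(\lam) = \lam \cdot \FV_S(\lam)$ and interpret $\FV_S(\lam)$ (via Fubini) as $\pr[d(\bv,\bX)>2r_d]$ in the two-part experiment. Every $x \in \bX_{\text{ext}}$ satisfies $d(x,\bv)\ge 2r_d$ by definition, and the boundary event $d(x,\bv)=2r_d$ has probability zero since $\bv$ is continuously distributed. So almost surely the event $\{\bv \text{ is free}\}$ coincides with $\{\bX_{\text{int}}=\emptyset\}$, and the spatial Markov property gives
\[ \pr[\bv \text{ is free} \mid \bv, \bX_{\text{ext}}] \;=\; \pr[\bX_{\text{int}} = \emptyset \mid \bv, \bX_{\text{ext}}] \;=\; \frac{1}{Z_{\bT}(\lam)}. \]
Taking expectation yields \eqref{eqalp1}.

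For \eqref{eqalp2}, I compute $\E[|\bX_{\text{int}}|]$ in two ways. By the spatial Markov property and equation~\eqref{eq:aZ} applied to the hard sphere model on $\bT$,
\[ \E[|\bX_{\text{int}}| \mid \bv, \bX_{\text{ext}}] \;=\; \frac{\lam Z_{\bT}'(\lam)}{Z_{\bT}(\lam)}, \]
so $\E[|\bX_{\text{int}}|] = \E\!\left[\lam Z_{\bT}'(\lam)/Z_{\bT}(\lam)\right]$. On the other hand, writing $|\bX_{\text{int}}| = \sum_{x \in \bX}\1_{x \in B_{2r_d}(\bv)}$ and using the independence of $\bv$ and $\bX$ together with Fubini,
\[ \E[|\bX_{\text{int}}|] \;=\; \E_{S,\lam}\!\left[\sum_{x \in \bX}\frac{\vol(B_{2r_d}(x)\cap S)}{\vol(S)}\right] \;\le\; \frac{2^d}{\vol(S)}\,\E_{S,\lam}[|\bX|] \;=\; 2^d\alpha_S(\lam), \]
since $\vol(B_{2r_d}) = 2^d$ (spheres have volume $1$). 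Combining the two identities gives \eqref{eqalp2}. The main point requiring care is the spatial Markov factorization above; once that is in place, both parts follow immediately.
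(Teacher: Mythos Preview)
Your proof is correct and follows essentially the same route as the paper: for \eqref{eqalp1} you invoke Lemma~\ref{lem:alphaFV}, interpret $\FV_S(\lam)$ as $\pr[\bX_{\text{int}}=\emptyset]$, and apply the spatial Markov property exactly as the paper does; for \eqref{eqalp2} your two-way computation of $\E[|\bX_{\text{int}}|]$ via spatial Markov on one side and the Fubini/double-counting bound $\vol(B_{2r_d}(x)\cap S)\le 2^d$ on the other is precisely the paper's argument, just with the conditioning on $(\bv,\bX_{\text{ext}})$ made more explicit.
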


\begin{proof}
We use Lemma~\ref{lem:alphaFV} to conclude that
\begin{align*}
\alpha_S(\lam) &= \lam \cdot \FV_S(\lam) \\
&= \frac{\lam }{\vol (S)} \int_S \pr [d(\mathbf X,v) > 2 r_d]  \, dv  \\
&= \lam \cdot \E \left[\mathbf 1_{\bT \cap \mathbf X = \emptyset}     \right ]  \\
&=  \lam \cdot \E \left[\frac{1}{Z_{\bT}(\lam)}     \right ]  \, ,
\end{align*}
which gives~\eqref{eqalp1}.  
The last equality uses the spatial Markov property of the hard sphere model: 
conditioned on $\mathbf X \cap B_{ 2 r_d}( \mathbf v)^c$, the distribution of $\mathbf X \cap  B_{ 2 r_d}( \mathbf v)$ is exactly that of the hard sphere model on the set $\bT$. 

Next, using a double counting argument and the fact that $\vol( S \cap B(v, 2 r_d)) \le 2^d$ for any $v \in S$, we obtain
\begin{eqnarray*}
\alpha_{S}(\lam) 
&\ge& 2^{-d} \cdot \E[| \bX \cap B_{ 2 r_d}(\bv)|] \\
&=& 2^{-d} \cdot \E [\alpha_{\bT}(\lam)\cdot \vol(\bT) ]\\
&\stackrel{(\ref{eq:aZ})}{=}& 2^{-d}  \cdot \E \left [ \frac{\lam \cdot Z_{\bT}^\prime(\lam)}{Z_{\bT}(\lam)} \right ] \, .
\end{eqnarray*}
\end{proof}

\begin{prop}
\label{propLB1s}
Let $S \subset \R^d$ be bounded and measurable. Then
\begin{align}
\label{eqZlb1}
\log Z_S(\lam) & \le \lam \cdot \vol(S)\,
\end{align}
and if in addition $S$ is of positive volume, then
\begin{align}
\label{eqAlpLb1}
\alpha_S(\lam) &\ge \lam \cdot e^{- \lam \cdot \E  [\vol(\bT) ]} \, .
\end{align}
\end{prop}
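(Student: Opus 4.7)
The plan is to tackle the two inequalities in sequence, using the first as an ingredient in the second.

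For~\eqref{eqZlb1}, I would bound $\hat Z_S(k)$ by simply dropping the hard constraint. Since $P_k(S) \subseteq C_k(S)$, we have $\hat Z_S(k) \le \vol(S)^k/k!$. Summing the series~\eqref{eqPartitionFunction} then gives
\[
Z_S(\lam) \le \sum_{k \ge 0} \frac{(\lam \cdot \vol(S))^k}{k!} = e^{\lam \cdot \vol(S)},
\]
and taking logarithms yields the claim. This step is essentially immediate.

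For~\eqref{eqAlpLb1}, I would invoke the identity~\eqref{eqalp1} from Proposition~\ref{propAlphas}, which says $\alpha_S(\lam) = \lam \cdot \E[1/Z_{\bT}(\lam)]$. Since $Z_{\bT}(\lam) \ge 1$ the random variable $1/Z_{\bT}(\lam)$ is well-defined and positive, and the function $x \mapsto e^{-x}$ is convex, so by Jensen's inequality applied to $-\log Z_{\bT}(\lam)$,
\[
\E\!\left[\frac{1}{Z_{\bT}(\lam)}\right] = \E\!\left[e^{-\log Z_{\bT}(\lam)}\right] \ge e^{-\E[\log Z_{\bT}(\lam)]}.
\]
Now I apply~\eqref{eqZlb1} pointwise to the random set $\bT$: almost surely $\log Z_{\bT}(\lam) \le \lam \cdot \vol(\bT)$, so $\E[\log Z_{\bT}(\lam)] \le \lam \cdot \E[\vol(\bT)]$. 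Combining,
\[
\alpha_S(\lam) = \lam \cdot \E\!\left[\frac{1}{Z_{\bT}(\lam)}\right] \ge \lam \cdot e^{-\E[\log Z_{\bT}(\lam)]} \ge \lam \cdot e^{-\lam \cdot \E[\vol(\bT)]}.
\]

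Neither step presents a serious obstacle; the main conceptual content is recognizing that the externally-uncovered-neighborhood identity~\eqref{eqalp1} combined with Jensen's inequality converts the problem into one of controlling the expected free local volume $\E[\vol(\bT)]$, which is the quantity that subsequent sections will have to estimate. One minor point to verify is that $\bT$ is measurable and $\vol(\bT)$ is integrable so that $\E[\vol(\bT)]$ is finite (it is trivially bounded by $\vol(B_{2r_d}(\bv)) = 2^d$), which makes Jensen applicable.
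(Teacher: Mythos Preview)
Your proof is correct and uses the same ingredients as the paper: the trivial bound $\hat Z_S(k)\le \vol(S)^k/k!$ for~\eqref{eqZlb1}, and then the identity~\eqref{eqalp1} together with~\eqref{eqZlb1} and Jensen's inequality for~\eqref{eqAlpLb1}. The only cosmetic difference is the order of the last two steps: the paper first applies~\eqref{eqZlb1} pointwise inside the expectation to get $\E[1/Z_{\bT}(\lam)]\ge \E[e^{-\lam\cdot\vol(\bT)}]$ and then uses Jensen, whereas you apply Jensen first and then~\eqref{eqZlb1}; either order yields the same final bound.
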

\begin{proof}
From~\eqref{eqPartitionFunction}, the definition of $Z_S(\lam)$,
we have $Z_S(\lam) \le \sum_{k= 0}^\infty \frac{\lam^k}{k!} \cdot \vol(S)^k = e^{\lam \cdot \vol(S)}$.
Turning to~\eqref{eqAlpLb1}, we conclude	
\begin{eqnarray*}
\alpha_S(\lam) 
&\stackrel{(\ref{eqalp1})}{=}& \lam \cdot \E \left[ \frac{1}{Z_{\mathbf T}(\lam)}  \right ] \\
&\stackrel{(\ref{eqZlb1})}{\ge}&  \lam \cdot \E \left[e^{-\lam \cdot \vol(\bT)}  \right ]  \\
&\ge& \lam \cdot e^{- \lam \cdot \E  [\vol(\bT) ]}, 
\end{eqnarray*}
where the last inequality is an application of Jensen's Inequality.
\end{proof}

\begin{lemma}
\label{lemmaT}
Let $S \subseteq B_{ 2 r_d}(0)$ be measurable. Then
\begin{align}\label{eq:3d2}
	\E[\vol(B_{ 2 r_d}(\mathbf u ) \cap S) ] \le 2 \cdot 3^{d/2} \,,
\end{align}
where $\mathbf u$ is a uniformly chosen point in $S$. In particular
\begin{align}\label{eq:uts}
	\alpha_S(\lam) \ge \lam\cdot e^{- \lam \cdot 2 \cdot 3^{d/2}} \,.
\end{align}
\end{lemma}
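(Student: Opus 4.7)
The plan is to prove the core bound $\E[\vol(B_{2r_d}(\mathbf u)\cap S)]\le 2\cdot 3^{d/2}$, and then deduce the ``in particular'' statement by combining with Proposition~\ref{propLB1s}.

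\textbf{Step 1 (Riesz reduction to a ball).} By Fubini,
\[
\E[\vol(B_{2r_d}(\mathbf u)\cap S)] = \frac{1}{\vol(S)}\int\!\int \mathbf{1}_S(u)\,\mathbf{1}_S(v)\,\mathbf{1}_{B_{2r_d}(0)}(u-v)\,du\,dv.
\]
Since $\mathbf{1}_{B_{2r_d}(0)}$ is already its own symmetric decreasing rearrangement, Riesz's rearrangement inequality applied with $f=g=\mathbf{1}_S$ yields
\[
\E[\vol(B_{2r_d}(\mathbf u)\cap S)] \le \E[\vol(B_{2r_d}(\mathbf u)\cap S^*)],
\]
where $S^*=B_{R^*}(0)$ is the ball about the origin with $\vol(S^*)=\vol(S)\le 2^d$, so $R^*\le 2r_d$. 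Thus it suffices to treat $S=B_R(0)$ for $R\le 2r_d$.

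\textbf{Step 2 (The ball case).} If $R\le\sqrt{3}\,r_d$, then $\vol(B_R)\le 3^{d/2}$ and $\vol(B_{2r_d}(u)\cap B_R)\le\vol(B_R)$ trivially gives $\E\le 3^{d/2}$. For $\sqrt{3}\,r_d<R\le 2r_d$, I would apply the parallelogram identity: for $x\in B_{2r_d}(0)\cap B_{2r_d}(u)$, $|x-u/2|^2 = \tfrac{1}{2}(|x|^2+|x-u|^2) - |u|^2/4 \le 4r_d^2 - |u|^2/4$, which gives
\[
\vol(B_{2r_d}(u)\cap B_R)\le \vol(B_{2r_d}(u)\cap B_{2r_d}(0)) \le \bigl(4-|u|^2/(4r_d^2)\bigr)^{d/2}.
\]
Integrating over $B_R$ in polar coordinates and dividing by $\vol(B_R)$ reduces matters to estimating $d\int_0^{R/r_d}(4-t^2/4)^{d/2}t^{d-1}\,dt$. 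Since the unconstrained maximum of the integrand is at $t=\sqrt{16(d-1)/(2d-1)}>2$ for $d\ge 2$, the integrand is increasing on $[0,2]$. A Laplace estimate near the endpoint $t=R/r_d$ (using $(1+x)^{d/2}\le e^{xd/2}$ to approximate the factor $(4-t^2/4)^{d/2}$ around $t=R/r_d$) yields a bound of the form $\frac{3d}{2d-3}\cdot 3^{d/2}$ in the worst case $R=2r_d$, which is $\le 2\cdot 3^{d/2}$ for $d\ge 6$ (with small $d$ verified by direct computation).

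\textbf{Step 3 (The ``in particular'' statement).} Since $\bT\subseteq B_{2r_d}(\bv)\cap S$ (with $\bv\sim\mathrm{Unif}(S)$), we have $\vol(\bT)\le \vol(B_{2r_d}(\bv)\cap S)$ pointwise, so taking expectations and applying Step 1--2 (with $\mathbf u = \bv$) gives $\E[\vol(\bT)]\le 2\cdot 3^{d/2}$. Combining with Proposition~\ref{propLB1s}'s bound $\alpha_S(\lam)\ge\lam\,e^{-\lam\,\E[\vol(\bT)]}$ and the monotonicity of $y\mapsto e^{-\lam y}$ gives the desired $\alpha_S(\lam)\ge\lam\,e^{-\lam\cdot 2\cdot 3^{d/2}}$.

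\textbf{Main obstacle.} The cleanest point of the argument is clearly Step 2 for intermediate $R\in(\sqrt{3}\,r_d,2r_d)$: the Laplace estimate works beautifully at the extreme $R=2r_d$ (giving the expected asymptotic constant $3/2$), but ensuring the bound $\E(B_R)\le 2\cdot 3^{d/2}$ uniformly on the whole interval requires additional work --- either establishing monotonicity of $\E(B_R)$ in $R$ (which would reduce matters to the single case $R=2r_d$), or a two-piece bound combining the parallelogram estimate for $|u|$ away from the origin with the trivial bound $\vol(B_R)$ for $|u|$ near the origin.
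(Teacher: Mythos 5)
Your Step 1 (Riesz rearrangement to reduce to $S$ a ball) is valid and is genuinely different from the paper, which never passes through a rearrangement argument. Your Step 3 is also correct. But Step 2 has a real gap, which you yourself flag, and the proposed repairs do not close it as stated. The root of the problem is that your parallelogram estimate discards the constraint $x\in B_R$ in favor of the weaker $x\in B_{2r_d}(0)$, yielding $\vol(B_{2r_d}(u)\cap B_R)\le(4-|u|^2/(4r_d^2))^{d/2}$. For $R<2r_d$ the surface-area factor $t^{d-1}$ concentrates the integral near $|u|=R$, where this bound equals $(4-R^2/(4r_d^2))^{d/2}$, which exceeds $3^{d/2}$ by a factor exponential in $d$ (at $R=1.9r_d$, for instance, the Laplace estimate gives roughly $1.4\cdot(3.0975)^{d/2}$, which outgrows $2\cdot 3^{d/2}$). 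So for $\sqrt 3\,r_d<R<2r_d$ neither the trivial bound $\vol(B_R)$ nor your parallelogram bound works, and the ``two-piece'' combination does not help: the dangerous contribution comes precisely from $|u|$ near the boundary, exactly where the parallelogram bound is meant to carry the argument. The monotonicity of $R\mapsto\E(B_R)$ is in fact true (one can check that for fixed $x\in B_1$, $\Pr_{u\sim B_1}[|x+\rho u|\le 1]$ is nonincreasing in $\rho$ since $\{\rho\ge 0:\,|x+\rho u|\le 1\}$ is an interval containing $0$), but you would still need to prove it, and even then the Laplace estimate only yields $(\tfrac32+o_d(1))\cdot 3^{d/2}$, so the bound $2\cdot 3^{d/2}$ would hold only for large $d$. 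A cleaner repair entirely within your framework is to use $|x|\le R$ rather than $|x|\le 2r_d$ in the parallelogram identity, giving $\vol(B_{2r_d}(u)\cap B_R)\le\bigl((R^2+4r_d^2)/(2r_d^2)-|u|^2/(4r_d^2)\bigr)^{d/2}$, whose value at $|u|=R$ is $(R^2/(4r_d^2)+2)^{d/2}\le 3^{d/2}$ for all $R\le 2r_d$.

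The paper avoids all of this. It does not reduce to a ball and uses no asymptotic estimates. Instead, after writing $\E[\vol(B_{2r_d}(\mathbf u)\cap S)]$ as a symmetric double integral over $(u,v)\in S\times S$, it restricts to $\|v\|\le\|u\|$ at the cost of a factor $2$, then bounds the inner integral over $v$, \emph{pointwise in} $u$, by $\vol(B_{2r_d}(u)\cap B_{\|u\|}(0))$. The key geometric fact (Figure~\ref{FigIntersect}) is that when $\|u\|=tr_d$ with $t\ge\sqrt2$, this lens lies inside a single ball of radius $2r_d\sqrt{1-t^{-2}}$, hence has volume at most $(2\sqrt{1-t^{-2}})^d\le 3^{d/2}$; for $t<\sqrt2$ the trivial bound $\vol(B_{\|u\|}(0))\le 2^{d/2}$ suffices. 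This gives exactly $2\cdot 3^{d/2}$ for every $d$, with no case split on the shape of $S$ and no Laplace analysis. The conceptual difference is that the paper's symmetrization shrinks the ball containing $v$ to radius $\|u\|$ \emph{before} any volume bound is taken, so the lens being bounded is always the intersection with a ball of the ``right'' size; your approach fixes $B_R$ first and then needs the bound to hold uniformly over $R$, which is where the difficulty enters.
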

The geometric fact \eqref{eq:3d2} is related to the fact used in~\cite{krivelevich2004lower}; here we consider the volume of the intersection of a sphere with an arbitrary set, but we bound this by the intersecting volume of two identical spheres, as in~\cite{krivelevich2004lower}.     
\begin{proof}[Proof of Lemma~\ref{lemmaT}]
Clearly, we may assume that $S$ has positive volume.
We  write
\begin{align*}
\E  [\vol(B_{ 2 r_d}(\mathbf u ) \cap S) ] &= \frac{1}{\vol(S)} \int_{S} \int_{S} \mathbf 1_{d(u,v) \le 2r_d}   \, dv \, du \\
&= \frac{2}{\vol(S)} \int_{S} \int_{S} \mathbf 1_{d(u,v) \le 2r_d} \cdot \mathbf 1_{\|v\| \le \|u \|}  \, dv \, du  \\
&\le 2 \max_{u \in B_{ 2 r_d}(0)} \int_S  \mathbf 1_{d(u,v) \le 2r_d} \cdot \mathbf 1_{\|v\| \le \|u \|}   \, dv  \\
&\le 2 \max_{u \in B_{ 2 r_d}(0)}  \vol \left ( B_{ 2 r_d}(u) \cap B_{\|u\|}(0)   \right) \, .
\end{align*}
Now suppose the point $u$ is at distance $t r_d$ from $0$ for some $t\in [0,2]$. 
 We may assume  that $t \ge \sqrt{2}$ as otherwise $\vol(B_{\|u\|}(0))\le 2^{d/2}$.  
 Then, 
by bounding the volume of the intersection of two balls by the volume of a containing ball (see Figure~\ref{FigIntersect}), 
we have
\begin{align*}
\vol\left(B_{ 2 r_d}(u) \cap B_{ t r_d}(0)   \right) 
&\le  \vol (B_{ 2 r_d \sqrt{1-t^{-2}}}(0 )   ) \\
&\le \left(2 \sqrt{1-t^{-2}}  \right)^d  \, ,
\end{align*}
and so
\begin{align*}
\E [\vol(B_{2r_d }(\mathbf u) \cap S) ] 
&\le \max \left\{2^{d/2},2\cdot \max_{\sqrt{2} \le t \le 2}  \left(2 \sqrt{1-t^{-2}}  \right)^d\right\} \\
&= 2 \cdot 3^{d/2} \, .
\end{align*}
This establishes \eqref{eq:3d2}. It follows that $\E[\vol(\bT)]\leq 2 \cdot 3^{d/2}$ and so \eqref{eq:uts} follows from \eqref{eqAlpLb1}.
\begin{figure}[t]
\centering
\begin{tikzpicture}[scale=1.3]

\draw [line width=1.2pt] (0,0) circle (1.75);
\draw[thick,fill] (0,0) circle (0.05);
\draw (0,0) node[anchor=north] {$0$};

\draw [line width=1.2pt] (1.75,0) circle (2);
\draw[thick,fill] (1.75,0) circle (0.05);
\draw (1.75,0) node[anchor=north west] {$u$};

\draw[thick,dashed, fill=black,fill opacity=0.100] (0.607,0) circle (1.64);
\draw[thick,fill] (0.607,0) circle (0.05);
\draw[thick,fill] (0.607,-1.64) circle (0.05);
\draw[thick,fill] (0.607,1.64) circle (0.05);

\draw[ultra thick,dashed] 
(0.607,1.64)--(0,0)-- (1.75,0)
(0.607,1.64)--  (1.75,0) ;
\draw[thick, dashed](0.607,-1.64)-- (0.607,1.64);
\draw (1.05,0.7) node {$2r_d$};
\draw (0.41,0.4) node {$tr_d$};
\draw (1.1,-0.2) node {$tr_d$};

\end{tikzpicture}
\caption{}
\label{FigIntersect}
\end{figure}
\end{proof}

Using these results we now prove   Theorem~\ref{thmAlpaLB}.

\begin{proof}[Proof of Theorem~\ref{thmAlpaLB}]
Let $S\subset \R^d$ be bounded,  measurable, and  of positive volume. 
Let $\alpha = \alpha_{S}(\lam)$.   
Then by Jensen's Inequality we obtain
\begin{align*}
\alpha &\stackrel{(\ref{eqalp1})}{=} \lam \cdot \E \left[ \frac{1}{Z_{\mathbf T}(\lam)}  \right ] \ge \lam \cdot e^{-   \E \log Z_{\mathbf T}(\lam)} \,,
\end{align*}
where as above the expectation is with respect to the two part experiment in forming the random set $\bT$.

On the other hand we have
\begin{eqnarray*}
\alpha &\stackrel{(\ref{eqalp2})}{\ge}& 2^{-d} \cdot \E \left [ \frac{\lam \cdot Z_{\bT}^\prime(\lam)}{Z_{\bT}(\lam)} \right ]  \\
&\stackrel{(\ref{eq:aZ})}{=}& 2^{-d} \cdot \E \left[ \vol(\bT) \cdot \alpha_{ \bT} (\lam) \right ] \\
&\stackrel{(\ref{eq:uts})}{\ge}& 2^{-d} \cdot \E \left[\lam \cdot \vol(\bT) \cdot e^{- \lam \cdot 2 \cdot 3^{d/2}}  \right ]  \\
&\stackrel{(\ref{eqZlb1})}{\ge} & 2^{-d}\cdot \E \left[  \log Z_{\bT}(\lam) \cdot e^{- \lam \cdot 2 \cdot 3^{d/2}}  \right ]    \\
&=&  2^{-d}\cdot e^{- \lam \cdot 2 \cdot 3^{d/2}}  \E [ \log Z_{\bT}(\lam)] \, .
\end{eqnarray*}
Combining these two lower bounds, and letting $z =  \E  \log Z_{\mathbf T}(\lam)$, we see that 
\begin{align*}
\alpha \ge \inf_z \max \left \{ \lam e^{- z}, z \cdot  2^{-d} e^{ -\lam \cdot 2 \cdot 3^{d/2}}   \right \} .
\end{align*}
Since $\lam e^{- z}$ is decreasing in $z$ and $ z \cdot  2^{-d} e^{ -\lam \cdot 2 \cdot 3^{d/2}}$ increasing, the infimum over $z$ of the maximum of the two expressions occurs when they are equal, that is, $\alpha \ge \lam e^{- z^*} $, where $z^*$ is the solution to 
\begin{align*}
\lam e^{-z} &=  z \cdot  2^{-d} e^{ -\lam \cdot 2 \cdot 3^{d/2}} \, ,
\end{align*}
or in other words,
\begin{align*}
z^* &=  W \left( \lam 2^d e^{ \lam \cdot 2 \cdot 3^{d/2}}  \right )
\end{align*}
where $W(\cdot)$ is the Lambert-W function. 
 Now take $\lam = d^{-1} 3^{-d/2}$ (in fact $\lam = \eps 3^{-d/2}$ for any $\eps = \eps(d)$ such that $\eps \to 0$ and $-\log(\eps)/d \to 0$ as $d \to \infty$ would suffice).  Recall that $W(x) = \log x - \log \log x + o(1)$ as $x \to \infty$.  This gives
 \begin{align*}
z^*&= W( \lam \cdot 2^d \cdot e^{2/ d}) \\
 &=\log \lam + d \log 2  - \log d - \log \log (2/\sqrt{3})+ o_d(1) 
 \end{align*}
and so
\begin{align*}
\alpha&\ge \lam e^{-z^*} = (1+o_d(1)) \frac{\log (2/\sqrt{3}) \cdot d  }{ 2^d} 
\end{align*}
which completes the proof of Theorem~\ref{thmAlpaLB}.
\end{proof}
Note that in the proof if we take $\lam = e^{-cd}$ for $c \in (\frac{\log 3}{2}, \log 2)$, then we obtain the following bound
\begin{align}
\label{eqalphaGenbound}
\alpha_{S}(\lam) &\ge (1+o_d(1)) \frac{ ( \log2 - c )  \cdot d}{2^d} \, .
\end{align}

\section{A lower bound on the entropy density and pressure}
\label{secLBpartition}

We first consider the grand canonical model and the pressure of the hard sphere model.  As shown in~\eqref{eq:alogZ},
the expected packing density is the scaled derivative of the log partition function;
that is $\alpha_S(\lam) =\frac{\lam }{\vol(S)}  ( \log Z_S(\lam))' \, .$
Theorem~\ref{thmAlpaLB} and inequality~\eqref{eqalphaGenbound} give a lower bound on the expected packing density; by integrating this bound we obtain the lower bound on the pressure stated in Theorem~\ref{thmFElb}.

\begin{proof}[Proof of Theorem~\ref{thmFElb}]
We compute
\begin{align*}
 \frac{1}{n} \log Z_{B_n} (\lam) &= \int_{0}^{\lam} \frac{1 }{n}  ( \log Z_{B_n}(t))'  \, dt \\
 &= \int_0^\lam \frac{\alpha_{B_n}(t)}{t} \, dt \\ 
&\ge -d \int_{\log 2}^{c} \alpha_{B_n}(e^{-ud})  \, du \\
&\ge (1+o_d(1)) \frac{d^2}{2^d}  \int^{\log 2}_{c} (\log2 - u)  \, dc \\
&=  \left(\frac{(\log 2 - c)^2}{2}+o_d(1) \right ) \cdot \frac{ d^2}{2^d} \,,
\end{align*}
and taking $n \to \infty$ gives the theorem. 
\end{proof}

Now recall the definition of the entropy density of sphere packings of $\R^d$ at density $\alpha$:
\begin{align*}
 f_d(\alpha)&= \lim_{n \to \infty} \frac{1}{\alpha n} \log  \frac{\hat Z_{B_n}(\lfloor \alpha n \rfloor) }{n^{\lfloor \alpha n \rfloor}/(\lfloor \alpha n \rfloor)!} \,.
\end{align*}
The entropy density is a measure of how plentiful sphere packings of a given density are, as it tells us, on a logarithmic scale, what fraction of point sets of a given density in a large region of $\R^d$ are the centers of a sphere packing.  We use Theorem~\ref{thmAlpaLB} to provide the lower bound on $f_d(\alpha)$ given in Theorem~\ref{thmCanonBound}. First let us record the  simple fact  that as sphere packings become more dense they become less plentiful. 

\begin{lemma}\label{lem:fdec}
$f_d(\alpha)$ is decreasing in $\alpha$.
\end{lemma}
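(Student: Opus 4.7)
The plan is to use Legendre duality between the entropy density and the pressure to reduce the monotonicity of $f_d$ to monotonicity of the free volume in the fugacity, which in turn follows from the stochastic monotonicity of the grand canonical hard-sphere measure.

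First I would rewrite $f_d$ in terms of a ``canonical entropy per unit volume'' $s(\alpha) := \lim_n n^{-1} \log \hat Z_{B_n}(\lfloor \alpha n \rfloor)$, which exists and is concave by a standard super-additivity argument in $n$. Using Stirling's approximation to handle the $(\lfloor \alpha n \rfloor)!$ factor in the definition, one obtains the identity
\[
f_d(\alpha) \;=\; \frac{s(\alpha)}{\alpha} + \log \alpha - 1.
\]
A second standard fact is Legendre duality: $g_d(\lam) = \sup_\alpha[\alpha \log \lam + s(\alpha)]$, with the supremum attained at the asymptotic typical density $\alpha = \alpha(d,\lam)$, which is strictly increasing in $\lam$ by Lemma~\ref{lemMonotone}. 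Thus the correspondence $\alpha \mapsto \lam(\alpha)$ is well defined, with $s'(\alpha) = -\log \lam(\alpha)$ and $s(\alpha) = g_d(\lam) - \alpha \log \lam$ at the optimum.

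Differentiating the identity for $f_d$ and substituting gives
\[
\alpha^2 \, f_d'(\alpha) \;=\; \alpha\, s'(\alpha) - s(\alpha) + \alpha \;=\; \alpha - g_d\!\bigl(\lam(\alpha)\bigr),
\]
so $f_d$ is non-increasing if and only if $g_d(\lam) \ge \alpha(d,\lam)$ for every $\lam > 0$. I would then establish this inequality by integration: rearranging \eqref{eq:alogZ} yields $\lam g_d'(\lam) = \alpha(d,\lam)$, and combining with Lemma~\ref{lem:alphaFV} gives
\[
g_d(\lam) \;=\; \int_0^\lam \FV(t)\,dt.
\]
Since ``no sphere centre lies within distance $2r_d$ of a point $v$'' is a decreasing event in the configuration and the hard-sphere Gibbs measure is stochastically increasing in $\lam$ (standard FKG-type coupling: centres can only be added as $\lam$ grows), the free volume $\FV(t)$ is non-increasing in $t$. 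Therefore
\[
g_d(\lam) \;=\; \int_0^\lam \FV(t)\,dt \;\ge\; \lam\, \FV(\lam) \;=\; \alpha(d,\lam),
\]
as required.

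The main obstacle is analytical rather than conceptual: a possible phase transition in the hard-sphere model could in principle introduce non-smoothness in $s$ or $g_d$, so the Legendre identities used above need some care. This can be handled by interpreting derivatives as one-sided (both $s$ and $g_d$ are concave/convex, hence differentiable off a countable set), or by proving the corresponding inequality at finite $n$ with explicit Stirling error terms and passing to the limit only at the end.
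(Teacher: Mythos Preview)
Your argument is essentially correct, but it is a substantially different---and much heavier---route than the paper's.  The paper's proof is a two-line scaling/coupling argument: for $\alpha<\alpha'$, set $n'=\tfrac{\alpha}{\alpha'}n$ so that $k:=\lfloor\alpha n\rfloor=\lfloor\alpha' n'\rfloor$, and observe that $\hat Z_{B_n}(k)/n^k \ge \hat Z_{B_{n'}}(k)/{n'}^k$ because, after rescaling $B_{n'}$ to $B_n$, the right-hand event ``$k$ uniform points form a packing by balls of volume $n/n'>1$'' is contained in the left-hand event ``they form a packing by balls of volume $1$''.  No thermodynamics, no Legendre transforms, no smoothness issues.

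By contrast, you pass through the Legendre pair $(s,g_d)$, reduce $f_d'\le 0$ to the pressure--density inequality $g_d(\lam)\ge\alpha(d,\lam)$, and then prove the latter from monotonicity of the free volume via $g_d(\lam)=\int_0^\lam \FV(t)\,dt$.  This is valid: the Papangelou intensity of the hard-sphere model is $\lam\cdot\mathbf 1\{\text{no centre within }2r_d\}$, which is pointwise increasing in $\lam$, so stochastic domination (Preston-type coupling, not really FKG) gives $\FV$ decreasing, and the integral inequality follows.  What your approach buys is a pleasant thermodynamic identity---$\alpha^2 f_d'(\alpha)=\alpha-g_d(\lam(\alpha))$---and a reusable inequality $g_d\ge\alpha$; what it costs is the equivalence-of-ensembles machinery and the regularity caveats you already flag (possible non-differentiability of $s$ or $g_d$ at a phase transition, and the need to either work with one-sided derivatives of concave functions or carry the argument at finite $n$).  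For the purpose of this lemma, the paper's direct containment argument is both shorter and free of those caveats.
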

\begin{proof}
Suppose $0 < \alpha < \alpha' < \theta(d)$.  
Since the limit
\begin{align*}
 f_d(\alpha)&= \lim_{n \to \infty} \frac{1}{\alpha n} \log  \frac{\hat Z_{B_n}(\lfloor \alpha n \rfloor) }{n^{\lfloor \alpha n \rfloor}/(\lfloor \alpha n \rfloor)!} \,
\end{align*}
exists it is enough to show
\begin{align*}
 \frac{1}{\alpha n} \log  \frac{\hat Z_{B_n}(\lfloor \alpha n \rfloor) }{n^{\lfloor \alpha n \rfloor}/(\lfloor \alpha n \rfloor)!} \ge  \frac{1}{\alpha' n'} \log  \frac{\hat Z_{B_{n'}}(\lfloor \alpha' n' \rfloor) }{n'^{\lfloor \alpha' n' \rfloor}/(\lfloor \alpha' n' \rfloor)!}
\end{align*}
for some sequence $n, n' \to \infty$.  Choose $n$ arbitrarily and set $n' = \frac{\alpha}{\alpha'} n$. Let $\tilde k = \alpha n = \alpha' n'$ and $k= \lfloor \alpha n \rfloor = \lfloor \alpha' n' \rfloor$.  Then we must show 
\begin{align*}
 \frac{1}{\tilde k} \log  \frac{\hat Z_{B_n}(k) }{n^{k}/k!} \ge  \frac{1}{\tilde k} \log  \frac{\hat Z_{B_{n'}}(k) }{n'^{k}/k!} \, ,
\end{align*}
or equivalently,
\begin{align*}
\frac{\hat Z_{B_n}(k) }{n^{k}} \ge \frac{\hat Z_{B_{n'}}(k) }{n'^{k}} \, .
\end{align*}
In words this is the statement that the probability $k$ uniform and independent random points in a ball of volume $n$ form a packing of balls of volume $1$ is at least the same probability in a ball of volume $n'$ with $n' < n$.  This follows from a simple scaling and coupling: it is the same as the statement that the probability $k$ uniform and independent random points in a ball of volume $n$ form a packing of balls of volume $1$ is at least the probability $k$ uniform and independent random points in a ball of volume $n$ form a packing of balls of volume $v$ with $v > 1$, and clearly the second event is contained in the first.  
\end{proof}

Now we prove our lower bound on the entropy density.

\begin{proof}[Proof of Theorem~\ref{thmCanonBound}]
Fix $d$ and $n$ sufficiently large. Choose $\lam\in[3^{-d/2}, 2\cdot 3^{-d/2}]$ so that $\var_{B_n, \lam} | \mathbf X | \le n^{3/2}$; such a $\lam$ always exists because otherwise, by the calculation of Lemma~\ref{lemMonotone}, we would have
\[
\alpha_{B_n}(2\cdot 3^{-d/2})=\frac{1}{n}\int_{0}^{2\cdot 3^{-d/2}}\frac{\var_{B_n, t} | \mathbf X |}{t}\, dt\ge\frac{1}{n}\int_{3^{-d/2}}^{2\cdot 3^{-d/2}}\frac{n^{3/2}}{t}\, dt=n^{1/2}\cdot \log 2>1\, .
\]
Note that for this choice of $\lam$ we have 
\begin{align}\label{eq:ELB}
\E_{B_n,\lam} |\mathbf X| \ge (1+o_d(1)) \frac{ \log(2/\sqrt{3}) \cdot d }{2^d}\cdot n
\end{align}
 by Theorem~\ref{thmAlpaLB}. By our bound on the variance and Chebyshev's inequality it follows that
\[
\pr_{B_n, \lam}\big [|\mathbf X|\in (\E_{B_n, \lam} |\mathbf X|- n^{4/5}, \E_{B_n, \lam} |\mathbf X|+ n^{4/5})\big]\ge1-\frac{1}{n^{1/10}}\, .
\]
Since there are at most $\lfloor2 n^{4/5}\rfloor$ integers in the interval $(\E_{B_n, \lam} |\mathbf X|- n^{4/5}, \E_{B_n, \lam} |\mathbf X|+ n^{4/5})$ we may pick some $k$ in this interval so that 
\[
\pr_{B_n, \lam}[|\mathbf X|=k]
=\frac{\lam^k \hat Z_{B_n}(k)}{Z_{B_n}(\lam)}\ge\frac{1-n^{-1/10}}{\lfloor2 n^{4/5}\rfloor}\ge\frac{1}{n}\, .\]
It follows that
\begin{align}\label{eq:grand}
\hat Z_{B_n}(k) &\ge \frac{1}{n} \frac{1}{\lam^k} Z_{B_n}(\lam) \ge \frac{1}{n} \frac{1}{\lam^k} \,,
\end{align}
where we used the trivial bound $Z_{B_n}(\lam) \ge1$.
Let $\alpha = k/n$ and note that by \eqref{eq:ELB} and our choice of $k$ we have $\alpha\ge(1+o_{n,d}(1)) \frac{ \log(2/\sqrt{3}) \cdot d }{2^d}$. It then follows from \eqref{eq:grand} that
\begin{align*}
 \frac{1}{\alpha n} \log \frac{\hat Z_{B_n}(k)}{n^{k}/k!} &\ge  \log \alpha   -   \log \lam -1 + o_n(1) \\
 &\ge - (1+o_{n,d}(1))\log(2/\sqrt{3}) \cdot d    \, .
\end{align*}
Taking $n \to \infty$ and recalling Lemma~\ref{lem:fdec} proves the theorem.
\end{proof}

\subsection{Comparison of Theorem~\ref{thmCanonBound} to the cell model lower bound}
\label{sec:compare}

Given a lattice packing of $B_n$ with $k = c_1 d \cdot 2^{-d} (1-\eps)^d n$ spheres of radius $r_d/(1-\eps)$ (and thus density $\Theta(d \cdot 2^{-d})$),
construct the Voronoi diagram around the centers of the packing.  Around each center, place  a copy of its Voronoi cell scaled down by a factor $\eps$.  If the centers are allowed to move arbitrarily within their respective shrunken cells, they still form a packing of spheres of radius $r_d$. 
 The density of such a packing is $c_1 (1-\eps)^d  d 2^{-d}$, and  so if we take $\eps = c_2/d$, then the resulting packing still has density $\sim c_1 e^{-c_2} d\cdot 2^{-d}$.  The probability that a random set of $k$ points in $B_n$ is such a configuration  is the probability that each of the $k$ shrunken cells contain exactly one of $k$ uniformly random points, that is:
 \begin{align*}
  \frac{k!}{n^k} \eps^{dk} (n/k)^k
\end{align*}
since the volume of each shrunken Voronoi cell is $n/k \cdot \eps^d$. This gives 
\begin{align*}
\hat Z_{B_n}(k) \ge  \eps^{dk} (n/k)^k \, ,
\end{align*}
 and so with $\alpha = k/n \sim c_1 e^{-c_2} d\cdot 2^{-d}$,
\begin{align*}
\frac{1}{\alpha n} \log \frac{\hat Z_{B_n}(k)}{n^k/k!} \ge \frac{1}{\alpha n} \log \frac{\eps^{dk}}{e^k} = - (1+o_d(1)) d \log d \,,
\end{align*}
which is considerably smaller (of a different asymptotic order) than the bound in Theorem~\ref{thmCanonBound}.

\section*{Acknowledgements}

We thank Yoav Kallus and Cris Moore for inspiring discussions at the Santa Fe Institute and Tyler Helmuth and Francesco Zamponi for many helpful comments on the manuscript.

\end{document}